\theoremstyle{thmstyleone}%
\newtheorem{theorem}{Theorem}
\newtheorem{proposition}[theorem]{Proposition}%
\theoremstyle{thmstyletwo}%
\newtheorem{example}{Example}%
\newtheorem{remark}{Remark}%
\theoremstyle{thmstylethree}%
\newtheorem{definition}{Definition}%
\newtheorem{corollary}{Corollary}
\newtheorem{lemma}{Lemma}
\newtheorem{assumption}{Assumption}
\begin{document}

\title[Adaptive smoothing mini-batch stochastic accelerated gradient method]{Adaptive smoothing mini-batch stochastic accelerated gradient method for nonsmooth convex stochastic composite optimization}


\author[1]{\fnm{Ruyu} \sur{Wang}}\email{wangruyu@bjtu.edu.cn}

\author*[1]{\fnm{Chao} \sur{Zhang}}\email{zc.njtu@163.com}

\affil[1]{\orgdiv{Department of Applied Mathematics}, \orgname{Beijing Jiaotong University}, \orgaddress{\street{No.3 Shangyuancun}, \city{Haidian District}, \postcode{100044}, \state{Beijing}, \country{China}}}


\abstract{This paper considers a class of convex constrained nonsmooth convex stochastic composite optimization problems whose objective function is given by the summation of a differentiable convex component, together with a general nonsmooth but convex component. The nonsmooth component is not required to have easily obtainable proximal operator, or have the max structure that the smoothing technique in \cite{Nesterov} can be used. In order to solve such type problems, we propose an adaptive smoothing mini-batch stochastic accelerated gradient (AdaSMSAG) method, which combines the stochastic approximation method, the Nesterov's accelerated gradient method \cite{Nesterov}, and the smoothing methods that allow general smoothing approximations. Convergence of the method is established. Moreover, the order of the worst-case iteration complexity is better than that of the state-of-the art stochastic approximation methods. Numerical results are provided to illustrate the efficiency of the proposed AdaSMSAG method for a risk management in portfolio optimization and a family of Wasserstein distributionally robust support vector machine  problems with real data.}

\keywords{Smoothing method, Stochastic approximation, Constrained convex stochastic programming, Mini-batch of samples, Complexity}



\maketitle

\section{Introduction}

In this paper, we consider the constrained nonsmooth convex stochastic composite minimization problem
\begin{eqnarray}\label{orip}
\psi^*:=\min\limits_{x\in X}\{\psi(x): =  f(x) + h(x)\},
\end{eqnarray}
where \begin{eqnarray}\label{orip-2}
f(x)=E\left[F(x, \xi)\right],~~h(x)=E\left[H(x, \xi)\right].
\end{eqnarray}
Here $X$ is a closed convex set in the Euclidean space $\mathds{R}^n$, $\xi$ is a random vector whose probability distribution is supported on $\Xi\subseteq\mathds{R}^d$, and $F(\cdot,\xi): X\rightarrow\mathds{R}$ and $H(\cdot,\xi): X\rightarrow\mathds{R}$ are functions such that for every $\xi\in\Xi$, $F(\cdot, \xi) $ is convex with $L$-Lipschitz continuous gradient $\nabla F(\cdot,\xi)$, and $H(\cdot, \xi)$ is nonsmooth convex. We do not assume the proximal operator of $H(\cdot, \xi):X\rightarrow\mathds{R}$ and consequently the proximal operator of $h$ are easy to compute.
Many applications especially in machine learning are in this setting. There are two main difficulties to design an efficient algorithm for \eqref{orip}: the general nonsmoothness of $h$, and the stochastic setting that leads the difficulty for computing the objective value and the gradient of $\psi$ for the random vector $\xi$ in the general support set $\Xi$.

In the deterministic setting, i.e., $\Xi$ is a singleton in \eqref{orip-2},  smoothing algorithms that utilize the structure of the problems to define smoothing functions  have been shown to be an effective way to overcome the nonsmoothness in optimization.

Nesterov introduced a smoothing scheme \cite{Nesterov} for nonsmooth convex problems with additional assumptions that $X$ is bounded, and $h(x)$ is a nonsmooth convex function with explicit max-structure as follows.
\begin{eqnarray}\label{h}
h(x) = \max\limits_{u\in U} \{\langle Ax, u\rangle - Q(u) \},
\end{eqnarray}
where $U$ is a bounded closed convex set, $A$ is a linear operator, and $Q(\cdot)$ is a continuous convex function. In \cite{Nesterov}, the max structure in \eqref{h} is necessary to construct the smooth approximation $\tilde{h}_{\mu}(x)$ of $h(x)$ with its gradient $\nabla \tilde{h}_{\mu}(x)$ being Lipschitz continuous. This method combines the smooth approximation with a fixed predetermined smoothing parameter $\mu$, and the Nesterov's accelerated gradient scheme \cite{Nesterov1,Nesterov2} for smooth convex optimization problems, to achieve the worst-case iteration complexity
$$O\left(\frac{1}{\epsilon}\right),$$
where $\epsilon$ is the desired absolute accuracy of the difference between the original function value at the approximate solution and the the dual function value at the corresponding approximate dual solution. It was shown that the order $O\left(\frac{1}{\epsilon^2}\right)$ of the worst-case iteration complexity cannot be improved for the simplest subgradient method in nonsmooth optimization \cite{NemirovskiY}, and Nesterov pointed out that the improvement order of $O\left(\frac{1}{\epsilon}\right)$ is beneficial from the proper use of the structure of the problem \cite{Nesterov}. Quoc proposed an adaptive smoothing proximal gradient method \cite{Quoc} that updates $\mu$ from $\mu_{k}$ to $\mu_{k+1}$ at each iteration for unconstrained nonsmooth composite convex minimization problems with
$$X=\mathds{R}^n,\ f(x) \equiv 0,\ h(x) = h_1(x) + h_2(x),$$
where $h_1(x)$ is a nonsmooth convex function with explicit max-structure and $h_2(x)$ is a simple nonsmooth convex function such that the proximal operator is easy to compute. This method automatically updates the smoothing parameter at each iteration by
$$\mu_{k+1}=\frac{\overline{c}\mu_1}{k+\overline{c}},$$
where $k\geq1$, $\overline{c}\ge 1$ is a constant and $\mu_1$ is an initial smoothing parameter. It achieves the $\mathcal{O}\left(\frac{1}{\epsilon}\right)$-worst-case iteration complexity as in \cite{Nesterov}, and the adaptive updating of the smoothing parameter has been demonstrated efficiency in numerical experiments.

Nesteorv's smoothing method \cite{Nesterov} and Quoc's adaptive smoothing proximal gradient method \cite{Quoc} require the nonsmooth term of the objective function to have the explicit max-structure, or easily obtainable proximal operator. Fortunately, a type of smoothing methods for constrained nonsmooth optimization problems employs the smoothing approximations without such requirements, such as the smoothing projected gradient method \cite{Zhang2}, the smoothing quadratic regularization method \cite{BianChen}, the smoothing sequential quadratic programming method \cite{XuYeZhang}, the smoothing sequential quadratic programming framework \cite{Liu}, and the smoothing active set method in \cite{Zhang1}.
At each step of a certain smoothing method, the objective function is approximated by a smooth function with a fixed smoothing parameter and a certain smooth algorithm is employed to get the next iterate point. A special updating rule for the smoothing parameter is then checked to determine whether to decrease the smoothing parameter or keep it unchanged. More information about this type of smoothing methods and the various smoothing approximations can be found in the excellent survey paper \cite{Chen1}.

For the stochastic setting we focus on in this paper, the stochastic approximation (SA) method is one important approach for solving the stochastic convex programming, which can be dated back to the pioneering paper by Robbins and Monro \cite{Robbins}. A robust version of the SA method developed by Polyak \cite{Polyak}, and Polyak and Juditsky \cite{Polyak2}, improves the original version of the SA method. It has clear that for nonsmooth stochastic convex optimization, the order of iteration complexity required to find an $\epsilon$-approximate solution $\bar{x}$, i.e. $\psi(\bar{x})-\psi^* \le \epsilon$ for a pre-determined accuracy $\epsilon>0$, cannot be smaller than
$$O\left(\frac{1}{\epsilon^2}\right),$$
as pointed out in \cite{Nemirovski}. By combining the Nesterov's accelerated gradient scheme \cite{Nesterov1,Nesterov2} and the SA method, Lan \cite{Lan1} proposed an accelerated SA (AC-SA) method to the nonsmooth stochastic convex problems in a convex compact set, which achieves the order $\mathcal{O}\left(\frac{1}{\epsilon^2}\right)$ of iteration complexity. Ghadimi et al. \cite{Ghadimi1} extended this method to the nonsmooth stochastic strongly convex problems. In 2016, Ghadimi et al. \cite{Ghadimi2} proposed an accelerated gradient (AG) method for the unconstrained nonconvex nonsmooth optimization problems, where the nonsmooth component is required to be a simple convex function such as $\|x\|_1$, for which the proximal operator is easy to compute. In 2016, Wang et al. \cite{wangxiao} proposed a class of penalty methods with stochastic approximation for solving stochastic nonlinear programming problems. In 2021, Bai et al. \cite{Bai} developed a symmetric accelerated stochastic alternating direction method of multipliers (SAS-ADMM) for solving separable convex optimization problems with linear constraints. In 2021, Wang et al. \cite{Wang} proposed a mini-batch stochastic Nesterov's smoothing method for the constrained convex stochastic composite optimization problems. The nonsmooth component has an explicit max structure that may not easy to compute its proximal operator. The feasible region is required to be compact for convergence.

In this paper, we propose an adaptive smoothing  mini-batch stochastic accelerated gradient (AdaSMSAG) method for solving stochastic optimization problems \eqref{orip} with a general nonsmooth convex component $h$. We do not assume that $h$ has the max structure in \eqref{h}, or its proximal operator is easily obtained. The feasible region does not need to be compact. We can adopt any smoothing function $\tilde h_{\mu}(x)$ for $h(x)$ that satisfies Definition \ref{definition 1} below to obtain the smooth problem
\begin{eqnarray}\label{smoothp}
\min_{x\in X}\left\{\tilde \psi_{\mu}(x) := f(x) + \tilde h_{\mu}(x) \right\}.
\end{eqnarray}
We adopt the updating rule of the smoothing parameter similar as that in \cite{Quoc} to fasten the computational speed.

The AdaSMSAG method proposed in this paper is suitable for the stochastic setting. We show the convergence of the AdaSMSAG method by
\begin{eqnarray*}
E\left[\psi(y_{N})-\psi^*\right]\leq O\left(\frac{\ln (N+c) }{N}\right),
\end{eqnarray*}
where $c>0$ is a constant, and $N\geq 1$ is the maximum iteration number. Because
$
\ln (N+c) < \sqrt{N+c},
$
the order of the worst-case iteration complexity of the AdaSMSAG method is better than $O\left(\frac{1}{\epsilon^2}\right)$ that is considered to be the optimal order of the worst-case iteration complexity as in \cite{Nemirovski,NemirovskiY}.  Similar as for the deterministic setting \cite{Nesterov}, the optimal order $O\left(\frac{1}{\epsilon^2}\right)$ of the worst-case iteration complexity  is valid only for the black-box oracle model of the objective function. In practice, we always know something about the structure of the underlying objective functions. The proper use of the structure of the problem by smoothing approximation adopted in this paper does help in improving the efficiency to find an $\epsilon$-approximate solution.

At the $k$-th iterate, although $f$ and $\tilde{h}_{\mu_k}(x)$ are Lipschitz continuously differentiable, we assume that only the noisy gradients of $f$ and $\tilde{h}_{\mu_k}(x)$ are available via subsequent calls to a stochastic first-order oracle ($\mathcal{SFO}$). That is,  when we solve the smooth problem \eqref{smoothp} by an iterative method,
at the $k$-th iterate, $k\geq 1$, for the input $x_k\in X$, the $\mathcal{SFO}$ would output a stochastic gradient $\nabla\tilde{\Psi}_{\mu_k}(x_k, \xi_k)$, where $\xi_k$ is a random vector whose probability distribution is supported on $\Xi\subseteq\mathds{R}^d$.
Throughout the paper, we make the following assumptions for the functions $\tilde{\Psi}_{\mu_k}(\cdot, \xi_k)$.
\begin{assumption}\label{assumption 1}
For any fixed $\mu_k>0$ and $x_k\in X$, we have
\begin{eqnarray*}
&a)&E\left[\nabla\tilde{\Psi}_{\mu_k}(x_k, \xi_k)\right]=\nabla\tilde{\psi}_{\mu_k}(x_k),\\
&b)&E\left[\|\nabla\tilde{\Psi}_{\mu_k}(x_k, \xi_k)-\nabla\tilde{\psi}_{\mu_k}(x_k)\|^2\right]\leq \sigma^2,
\end{eqnarray*}
where $\sigma>0$ is a constant, and the expectation $E$ is taken with respect to the random vector $\xi_k\in \Xi$.
\end{assumption}

The remaining of this paper is organized as follows. In Section 2, we briefly review some basic concepts and results relating to the smoothing function \cite{bian} that will be used in our paper.
In Section 3, we develop an adaptive smoothing mini-batch stochastic accelerated gradient (AdaSMSAG) method. We show the convergence, as well as the order of the worst-case iteration complexity of the proposed method that is better than the state-of-the-art stochastic approximation methods for stochastic nonsmooth convex optimization. Numerical experiments on a risk management in portfolio optimization and a family of Wasserstein distributionally robust support vector machine (DRSVM) problems with real data are given in Section 4 to demonstrate the efficiency of our proposed method.

Throughout the paper, we use the following notation. We denote by $\mathds{Z_+}$ the positive integer set, by $\|x\|$ the Euclidean norm of a vector $x$ in the Euclidean space $\mathds{R}^n$. Given $x=\left(x_{1}, \ldots, x_{n}\right)^T \in \mathds{R}^{n}$ and $y= \left(y_{1}, \ldots, y_{n}\right)^T \in \mathds{R}^{n}$, the inner product of $x$ and $y$ is defined by $\langle x, y\rangle:=\sum\limits_{i=1}^{n} x_{i} y_{i}$. The notation $\lceil r\rceil$ stands for the smallest integer greater than or equal to $r\in \mathds{R}$. For a number $r\in \mathds{R}$ we denote $[r]_+ := \max\{r, 0\}$.

\section{Preliminaries}
In this section, we review some basic concepts and results that will be used later. We consider a class of smoothing functions with the following definition.

\begin{definition}\label{definition 1}
We call $\tilde \phi_{\mu}$ with $\mu \in(0, \bar{\mu}]$ a smoothing function of the convex function $\phi$ in the feasible region, if $\tilde{\phi}_{\mu}(\cdot)$ is continuously differentiable in $X$ for any fixed $\mu>0$ and satisfies the following conditions:\\
$(i)$   $\lim _{z \rightarrow x, \mu \downarrow 0} \tilde{\phi}_{\mu}(z) = \phi(x),~~\forall x \in X$;\\
$(ii)$  (convexity) $\tilde{\phi}_{\mu}(x)$ is convex with respect to $x$ in $X$ for any fixed $\mu>0$;\\
$(iii)$  (Lipschitz continuity with respect to $\mu$) there exists a positive constant $\kappa$ such that
\begin{eqnarray}\label{hmurelation}
\lvert \tilde{\phi}_{\mu_2}(x)-\tilde{\phi}_{\mu_1}(x)\rvert \leq \kappa\lvert\mu_{1}-\mu_{2}\rvert,~~\forall x \in X,~\mu_{1}, \mu_{2} \in(0, \bar{\mu}];
\end{eqnarray}
$(iv)$  (Lipschitz continuity with respect to $x$) there exists a constant $L>0$ such that for any $\mu \in(0, \bar{\mu}], \nabla \tilde{\phi}_{\mu}(\cdot)$ is Lipschitz continuous on $X$ with Lipschitz constant $\frac{L}{\mu}$.
\end{definition}

Note that the smoothing function in Definition 3.1 of \cite{bian} requires all the above conditions in Definition \ref{definition 1}, besides another gradient consistency property.
Definition \ref{definition 1} $(iii)$ implies
\begin{eqnarray}\label{hbound}
\lvert\tilde{\phi}_{\mu}(x)-\phi(x)\rvert \leq \kappa \mu, \quad \forall x \in X,~\mu \in(0, \bar{\mu}].
\end{eqnarray}
Many smoothing functions satisfy Definition 1.

\begin{example}
Chen and Mangasarian construct a class of smooth approximations of the function $(t)_{+}$ by convolution \cite{chunhua} as follows. Let $\rho: \mathds{R} \rightarrow \mathds{R}_{+}$ be a piecewise continuous density function satisfying
$\rho(s)=\rho(-s)$. It is easy to see that
\begin{eqnarray*}
\tilde{\phi}_{\mu}(t):=\int_{-\infty}^{\infty}(t-\mu s)_{+} \rho(s) ds
\end{eqnarray*}
is a smoothing function of $(t)_+$. According to the inequality (22) of \cite{Chen1}, we have
$${\kappa} = {\int_{-\infty}^{\infty}} \vert s\vert \rho(s) ds<\infty$$
 in Definition \ref{definition 1} $(iv)$. By choosing different density functions, we can obtain different smoothing function, e.g.,
the uniform smoothing function, the Neural Networks smoothing function, and the CHKS (Chen-Harker-Kanzow-Smale) smoothing function; see \cite{Chen1} for details.
\end{example}

Using Definition \ref{definition 1}, we then find the objective function $\tilde{\psi}_{\mu_k}(x)$ for the smooth problem in \eqref{smoothp} with $\mu = \mu_k$ is $L_k$-smooth on $X$, i.e., $\tilde{\psi}_{\mu_k}(x)$ is Lipschitz continuously differentiable with Lipschitz constant $L_k=L_f+L_{\tilde{h}_{\mu_k}}$ on $X$, where $L_f$ is the Lipschitz constant of $f$. According to Theorem 5.12 of \cite{Beck}, we get $L_f\geq\|\nabla^2f(x)\|=\lambda_{\max}(\nabla^2f(x))$ for any $x\in X$, where $\lambda_{\max}(\nabla^2f(x))$ means the maximal eigenvalue of the Hessian matrix of $f(x)$. Thus
\begin{eqnarray}\label{psiLsmoo}
\tilde{\psi}_{\mu_k}(y)\leq \tilde{\psi}_{\mu_k}(x)+\langle \nabla \tilde{\psi}_{\mu_k}(x),y-x\rangle+\frac{L_k }{2}\|y-x\|^2.
\end{eqnarray}


\section{Adaptive smoothing mini-batch stochastic accelerated gradient (AdaSMSAG) method}

In this section, we develop an adaptive smoothing mini-batch stochastic accelerated gradient (AdaSMSAG) method for \eqref{orip}. We will show the convergence as well as the order of worst-case iteration complexity, as well as the mini-batch sizes of the AdaSMSAG method.

At the $k$-th iterate of the AdaSMSAG method, we randomly choose a mini-bath samples $\xi^{[m_k]}:=\{\xi_{k,1},\ldots,\xi_{k,m_k}\}$ of the random vector $\xi\in \Xi$, where $m_k$ is the batch size. We denote by $\xi_{[k]}:=\{\xi^{[m_1]},\ldots,\xi^{[m_{k}]}\}$ the history of mini-batch samples from the first iterate up to the $k$-th iterate.

For any $k\geq 1$, we denote the mini-batch stochastic gradient $\nabla \tilde{\Psi}_{\mu_k}^k$ by
\begin{eqnarray}\label{Psi}
\nabla \tilde{\Psi}_{\mu_k}^k=\frac{1}{m_{k}}\sum\limits_{j=1}^{m_{k}}\nabla \tilde{\Psi}_{\mu_k}(x_k, \xi_{k,j}).
\end{eqnarray}
For ease of notation, we denote
\begin{eqnarray}
&&\delta_{\mu_k}(x_k)=\nabla\tilde{\Psi}_{\mu_k}^k-\nabla\tilde{\psi}_{\mu_k}(x_k),\label{deltamuk}\\
&&\delta_{\mu_k}^i(x_k)=\nabla\tilde{\Psi}_{\mu_k}(x_k,\xi_{k,i})-\nabla\tilde{\psi}_{\mu_k}(x_k),\ i=1,\cdots,m_k, \label{deltamuki}\\
&&S_{\mu_k}^j(x_{k})=\sum\limits_{i=1}^{j}\delta_{\mu_k}^i(x_k),\ j =1,\cdots,m_k, \label{Smuk}
\end{eqnarray}
where $\delta_{\mu_k}(x_k)=\frac{1}{m_{k}}\sum\limits_{i=1}^{m_{k}}\delta_{\mu_k}^i(x_k)$.

The following lemma addresses the relations of $\nabla \tilde{\Psi}_{\mu_k}^k$ to that of the original gradient $\nabla \tilde{\psi}_{\mu_k}(x_k)$, which can be shown without difficulty using the arguments similar as in \cite{Ghadimi}.

\begin{lemma}\label{lemma3.1} Under Assumption \ref{assumption 1}, we have for any $k\geq1$ and $\mu_k>0$,
\begin{eqnarray*}
&(a)& E\left[\nabla \tilde{\Psi}_{\mu_k}^k\right] = E\left[\nabla \tilde{\psi}_{\mu_k}(x_k)\right] ,\\
&(b)& E\left[\left\| \nabla \tilde{\Psi}_{\mu_k}^k - \tilde{\psi}_{\mu_k}(x_k) \right\|^2\right] \leq \frac{\sigma^2}{m_k},
\end{eqnarray*}
where the expectation is taken with respect to the history of mini-batch samples $\xi_{[k]}$.
\end{lemma}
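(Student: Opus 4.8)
The plan is to reduce both claims to a conditioning argument over the \emph{fresh} mini-batch drawn at iteration $k$, using Assumption \ref{assumption 1} together with the conditional independence of $\xi_{k,1},\dots,\xi_{k,m_k}$ given the history $\xi_{[k-1]}$ (which is all that $x_k$ and $\mu_k$ depend on). Throughout I write $E_k[\,\cdot\,] := E[\,\cdot \mid \xi_{[k-1]}]$ for the conditional expectation that averages only over the $k$-th mini-batch, so that the tower property $E[\,\cdot\,] = E\big[E_k[\,\cdot\,]\big]$ converts conditional identities and bounds into the unconditional statements of the lemma. This mirrors the argument in \cite{Ghadimi}.

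For part $(a)$: by \eqref{Psi} and linearity of expectation, $\nabla\tilde\Psi_{\mu_k}^k = \frac{1}{m_k}\sum_{j=1}^{m_k}\nabla\tilde\Psi_{\mu_k}(x_k,\xi_{k,j})$. Since $x_k$ and $\mu_k$ are $\xi_{[k-1]}$-measurable, Assumption \ref{assumption 1}$(a)$ applies conditionally and gives $E_k[\nabla\tilde\Psi_{\mu_k}(x_k,\xi_{k,j})] = \nabla\tilde\psi_{\mu_k}(x_k)$ for every $j$, hence $E_k[\nabla\tilde\Psi_{\mu_k}^k] = \nabla\tilde\psi_{\mu_k}(x_k)$; taking $E$ on both sides yields $(a)$. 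In the notation \eqref{deltamuk}--\eqref{deltamuki} this is exactly $E_k[\delta_{\mu_k}^i(x_k)] = 0$ for each $i$, and therefore $E_k[\delta_{\mu_k}(x_k)] = 0$.

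For part $(b)$: write $\nabla\tilde\Psi_{\mu_k}^k - \nabla\tilde\psi_{\mu_k}(x_k) = \delta_{\mu_k}(x_k) = \frac{1}{m_k}S_{\mu_k}^{m_k}(x_k)$ with $S_{\mu_k}^j$ as in \eqref{Smuk}, so that $\|\nabla\tilde\Psi_{\mu_k}^k - \nabla\tilde\psi_{\mu_k}(x_k)\|^2 = \frac{1}{m_k^2}\|S_{\mu_k}^{m_k}(x_k)\|^2$. I would bound $E_k[\|S_{\mu_k}^{m_k}(x_k)\|^2]$ by a martingale-difference induction on $j$: conditioning further on $\xi_{k,1},\dots,\xi_{k,j-1}$, the increment $\delta_{\mu_k}^j(x_k)$ has conditional mean $0$ by Assumption \ref{assumption 1}$(a)$, while $S_{\mu_k}^{j-1}(x_k)$ is measurable with respect to that smaller $\sigma$-field, so the cross term $E_k[\langle S_{\mu_k}^{j-1}(x_k), \delta_{\mu_k}^j(x_k)\rangle] = 0$. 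Expanding $\|S_{\mu_k}^{j}(x_k)\|^2 = \|S_{\mu_k}^{j-1}(x_k)\|^2 + 2\langle S_{\mu_k}^{j-1}(x_k), \delta_{\mu_k}^j(x_k)\rangle + \|\delta_{\mu_k}^j(x_k)\|^2$ and using $E_k[\|\delta_{\mu_k}^j(x_k)\|^2] \le \sigma^2$ from Assumption \ref{assumption 1}$(b)$ gives $E_k[\|S_{\mu_k}^{j}(x_k)\|^2] \le E_k[\|S_{\mu_k}^{j-1}(x_k)\|^2] + \sigma^2$, hence $E_k[\|S_{\mu_k}^{m_k}(x_k)\|^2] \le m_k\sigma^2$. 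Dividing by $m_k^2$ and applying the tower property yields $E[\|\nabla\tilde\Psi_{\mu_k}^k - \nabla\tilde\psi_{\mu_k}(x_k)\|^2] \le \sigma^2/m_k$, which is $(b)$ (the term $\tilde\psi_{\mu_k}(x_k)$ in the stated inequality being a typo for $\nabla\tilde\psi_{\mu_k}(x_k)$).

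The only genuinely delicate point is the filtration bookkeeping: one must confirm that $x_k$ and $\mu_k$ are deterministic functions of $\xi_{[k-1]}$ so that Assumption \ref{assumption 1} — stated for a \emph{fixed} $\mu_k>0$ and $x_k\in X$ — is legitimately invoked conditionally, and that the $k$-th mini-batch is sampled independently of $\xi_{[k-1]}$. Given the structure of the AdaSMSAG iteration this is immediate, after which the remainder is the routine variance-of-an-average estimate; in particular, no convexity, smoothness, or Lipschitz property of $\tilde h_{\mu}$ enters the proof of this lemma.
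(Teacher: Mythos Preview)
Your proposal is correct and matches the paper's approach: the paper does not spell out a proof but simply remarks that the lemma ``can be shown without difficulty using the arguments similar as in \cite{Ghadimi},'' which is precisely the conditioning-plus-martingale-difference argument you outline. Your observation about the typo ($\tilde\psi_{\mu_k}(x_k)$ should read $\nabla\tilde\psi_{\mu_k}(x_k)$) is also correct.
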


Now we give the AdaSMSAG method as follows.

\begin{algorithm}
\caption{Adaptive smoothing mini-batch stochastic accelerated gradient (AdaSMSAG) method}\label{algo1}
\begin{algorithmic}[1]
\Require the iteration limit $N>0$, the batch sizes $\{m_k\}_{k\geq 1}$ with $m_k \geq 1$, $\{\alpha_k\}_{k\geq 1}$ with $0<\alpha_k< 1$, $c>0, \mu_0>0$, $\{\beta_k\}_{k\geq 1}$ with $\beta_k>0$, and $\{\theta_k\}_{k\geq 1}$ with $\theta_k>0$, $z_{0}=y_{0}$.
\For{$k=1,\ldots, N$}\label{algln2}
        \State $x_{k}=\alpha_{k}z_{k-1}+(1-\alpha_{k})y_{k-1}$
        \State $\mu_{k}=\frac{c\mu_0}{k+c}$
        \State Call the $\mathcal{SFO}$ $m_k$ times to obtain $\nabla \tilde{\Psi}_{\mu_k}(x_{k}, \xi_{k,i})$, $i=1,\cdots, m_{k}$.
        \State Set $\nabla \tilde{\Psi}_{\mu_k}^{k}=\frac{1}{m_{k}}\sum\limits_{i=1}^{m_{k}}\nabla \tilde{\Psi}_{\mu_k}(x_{k}, \xi_{k,i})$, and compute
        \State $y_{k}=\arg\min\limits_{y\in X}\{\langle \nabla \tilde{\Psi}_{\mu_k}^{k},y-x_{k}\rangle+\frac{\beta_k}{2}\|y-x_{k}\|^2\}$
        \State $z_{k}=\arg\min\limits_{x\in X}\{\langle\nabla\tilde{\Psi}_{\mu_k}^{k},x-x_{k}\rangle+\frac{\theta_{k}}{2}\|x-z_{k-1}\|^2\}$
\EndFor
\State {\bf{output}}: $y_{N}$.
\end{algorithmic}
\end{algorithm}
\bigskip

We have the following simple observations in Lemmas 2 and 3.

\begin{lemma}\label{lemma3.2}
If $\mu_k$ is monotonically decreasing with $k$,  we have for any $k\geq 1$ and $x, y_k\in X$,
\begin{eqnarray}
\psi(y_k)-\psi(x)&\leq& \tilde{\psi}_{\mu_k}(y_k)-\tilde{\psi}_{\mu_k}(x)+2\kappa\mu_k,\label{psibound}\\
\tilde{\psi}_{\mu_{k+1}}(y_k)-\tilde{\psi}_{\mu_{k+1}}(x)&\leq&\tilde{\psi}_{\mu_{k}}(y_k)-\tilde{\psi}_{\mu_{k}}(x)+2\kappa\left(\mu_k-\mu_{k+1}\right).\label{psimurelation}
\end{eqnarray}
\end{lemma}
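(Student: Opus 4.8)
The plan is to exploit the fact that the smooth part $f$ is common to both $\psi=f+h$ and $\tilde\psi_{\mu_k}=f+\tilde h_{\mu_k}$, so that every difference appearing in the statement collapses to a statement about the nonsmooth term $h$ and its smoothing $\tilde h_{\mu_k}$ alone — which is exactly what the pointwise bounds \eqref{hbound} and \eqref{hmurelation} control. I would first note that these two bounds, stated in Section 2 for a generic convex $\phi$ together with a smoothing $\tilde\phi_\mu$ satisfying Definition \ref{definition 1}, apply in particular to the pair $(h,\tilde h_\mu)$, since $\tilde h_\mu$ was chosen to satisfy Definition \ref{definition 1}.

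For \eqref{psibound}, I would write, cancelling the $f$-terms,
\[
\psi(y_k)-\psi(x) = \bigl[\tilde\psi_{\mu_k}(y_k)-\tilde\psi_{\mu_k}(x)\bigr] + \bigl[h(y_k)-\tilde h_{\mu_k}(y_k)\bigr] - \bigl[h(x)-\tilde h_{\mu_k}(x)\bigr],
\]
then bound $\lvert h(y_k)-\tilde h_{\mu_k}(y_k)\rvert\le\kappa\mu_k$ and $\lvert h(x)-\tilde h_{\mu_k}(x)\rvert\le\kappa\mu_k$ by \eqref{hbound}, and add, producing the $2\kappa\mu_k$ term. For \eqref{psimurelation}, I would subtract the two smoothed gaps; again the $f$-terms cancel, leaving
\[
\bigl[\tilde\psi_{\mu_{k+1}}(y_k)-\tilde\psi_{\mu_{k+1}}(x)\bigr] - \bigl[\tilde\psi_{\mu_k}(y_k)-\tilde\psi_{\mu_k}(x)\bigr] = \bigl[\tilde h_{\mu_{k+1}}(y_k)-\tilde h_{\mu_k}(y_k)\bigr] - \bigl[\tilde h_{\mu_{k+1}}(x)-\tilde h_{\mu_k}(x)\bigr].
\]
Bounding each bracket by $\kappa\lvert\mu_k-\mu_{k+1}\rvert$ via \eqref{hmurelation}, and using the hypothesis that $\{\mu_k\}$ is monotonically decreasing to replace $\lvert\mu_k-\mu_{k+1}\rvert$ by $\mu_k-\mu_{k+1}$, the sum of the two terms is $2\kappa(\mu_k-\mu_{k+1})$; rearranging gives the claim.

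There is no genuine obstacle here: the argument is purely pointwise, evaluated only at the two points $x$ and $y_k$, and it uses neither compactness of $X$ nor any smoothness or structural assumption on $h$. The monotonicity hypothesis on $\{\mu_k\}$ enters only in the final step of the second inequality, to drop the absolute value. The one thing to state with care is that $(h,\tilde h_\mu)$ inherits \eqref{hbound} and \eqref{hmurelation} from Definition \ref{definition 1}, as noted at the outset.
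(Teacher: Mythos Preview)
Your proof is correct and follows essentially the same approach as the paper: both arguments cancel the common $f$-terms and then apply the pointwise bounds \eqref{hbound} and \eqref{hmurelation} at the two points $y_k$ and $x$ to pick up the $2\kappa\mu_k$ and $2\kappa(\mu_k-\mu_{k+1})$ terms, respectively. The only cosmetic difference is that you first write out the exact identity and then bound the residual brackets, whereas the paper chains the inequalities directly; the content is identical.
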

\begin{proof}
For any fixed $x, y_k\in X$ and $\mu_k>0$, we have
\begin{eqnarray*}
\psi(y_k)-\psi(x)&\overset{\eqref{orip}}{=}&\left[f(y_k)+h(y_k)\right]-\left[f(x)+h(x)\right]\\
&\overset{\eqref{hbound}}{\leq}&\left[f(y_k)+\tilde{h}_{\mu_k}(y_k)\right]-\left[f(x)+\tilde{h}_{\mu_k}(x)\right]+2\kappa\mu_k\\
&\overset{\eqref{smoothp}}{=}&\tilde{\psi}_{\mu_k}(y_k)-\tilde{\psi}_{\mu_k}(x)+2\kappa\mu_k.
\end{eqnarray*}

For the inequality \eqref{psimurelation}, in view of $\mu_{k+1}\leq\mu_k$ we find that
\begin{eqnarray*}
\tilde{\psi}_{\mu_{k+1}}(y_k)-\tilde{\psi}_{\mu_{k+1}}(x)
&\overset{\eqref{smoothp}}{=}&f(y_k)+\tilde{h}_{\mu_{k+1}}(y_k)-\left[f(x)+\tilde{h}_{\mu_{k+1}}(x)\right]\\
&\overset{\eqref{hmurelation}}{\leq}&f(y_k)+\tilde{h}_{\mu_{k}}(y_k)+\kappa\left(\mu_k-\mu_{k+1}\right)\\
&&~~~-\left[f(x)+\tilde{h}_{\mu_{k}}(x)-\kappa\left(\mu_k-\mu_{k+1}\right)\right]\\
&\overset{\eqref{smoothp}}{=}&\tilde{\psi}_{\mu_{k}}(y_k)-\tilde{\psi}_{\mu_{k}}(x)+2\kappa\left(\mu_k-\mu_{k+1}\right).
\end{eqnarray*}
\end{proof}


Below we develop Lemmas \ref{lemma3.41} and \ref{lemma3.4} which are important for developing the convergence results.
\begin{lemma}\label{lemma3.41} (Lemma 2 of \cite{Ghadimi1})
Let the convex function $p: X \rightarrow \mathds{R}$, the points $\tilde{x}, \tilde{y} \in X$ and the scalars $\mu_{1}, \mu_{2} \geq 0$ be given. Let $\omega: X \rightarrow \mathds{R}$ be a differentiable convex function and $V(x, z)$ be the Bregman's distance. If
\begin{eqnarray*}
u^{*} \in \arg\min\limits_{u \in X}\left\{p(u)+\mu_{1} V(\tilde{x}, u)+\mu_{2} V(\tilde{y}, u)\right\},
\end{eqnarray*}
then for any $u \in X$, we have
\begin{eqnarray*}
& & p\left(u^{*}\right)+\mu_{1} V\left(\tilde{x}, u^{*}\right)+\mu_{2} V\left(\tilde{y}, u^{*}\right) \\
& & \quad \quad \leq  p(u)+\mu_{1} V(\tilde{x}, u)+\mu_{2} V(\tilde{y}, u)-\left(\mu_{1}+\mu_{2}\right) V\left(u^{*}, u\right) .
\end{eqnarray*}
\end{lemma}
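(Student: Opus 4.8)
The plan is to derive the inequality from the first-order optimality condition for the convex problem defining $u^{*}$, combined with the three-point identity for the Bregman distance. First I would note that the map $u\mapsto p(u)+\mu_{1}V(\tilde{x},u)+\mu_{2}V(\tilde{y},u)$ is convex on the convex set $X$: $p$ is convex by hypothesis, and for a fixed first argument $V(\tilde{x},\cdot)=\omega(\cdot)-\omega(\tilde{x})-\langle\nabla\omega(\tilde{x}),\cdot-\tilde{x}\rangle$ is convex because $\omega$ is. Hence the minimizer $u^{*}$ satisfies a variational inequality: there exists a subgradient $g\in\partial p(u^{*})$ such that $\langle g+\mu_{1}(\nabla\omega(u^{*})-\nabla\omega(\tilde{x}))+\mu_{2}(\nabla\omega(u^{*})-\nabla\omega(\tilde{y})),\,u-u^{*}\rangle\ge 0$ for all $u\in X$, where I have used $\nabla_{u}V(x,u)=\nabla\omega(u)-\nabla\omega(x)$.

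The second ingredient is the elementary three-point identity $\langle\nabla\omega(z)-\nabla\omega(x),\,z-u\rangle=V(x,z)+V(z,u)-V(x,u)$, valid for all $x,z,u\in X$, which one verifies simply by expanding the three Bregman terms from the definition of $V$. Applying it with $(x,z)=(\tilde{x},u^{*})$ and with $(x,z)=(\tilde{y},u^{*})$ rewrites the two Bregman-gradient inner products in the optimality condition as $V(\tilde{x},u^{*})+V(u^{*},u)-V(\tilde{x},u)$ and $V(\tilde{y},u^{*})+V(u^{*},u)-V(\tilde{y},u)$, respectively. Combining this with the subgradient inequality $\langle g,\,u^{*}-u\rangle\ge p(u^{*})-p(u)$, I would substitute everything into the variational inequality and rearrange; the two copies of $V(u^{*},u)$ then collect into $(\mu_{1}+\mu_{2})V(u^{*},u)$, which lands on the right-hand side with a minus sign, giving exactly the claimed estimate.

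I do not anticipate a genuine obstacle: the statement is quoted as Lemma 2 of \cite{Ghadimi1} and could simply be cited. The only points that require care are to phrase the optimality condition in subgradient form, since $p$ need not be differentiable, so the argument leans on convexity of $p$ and of $X$ rather than on $\nabla p(u^{*})$; and to keep the sign convention in the three-point identity consistent with the orientation of $V(\cdot,\cdot)$ used throughout the paper (i.e. which slot is the ``center''), since a slip there would reverse the sign of the decisive $(\mu_{1}+\mu_{2})V(u^{*},u)$ term.
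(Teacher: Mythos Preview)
Your argument is correct and is the standard way this inequality is established: optimality condition in subgradient form, the three-point identity for the Bregman distance, and the subgradient inequality for $p$ combine exactly as you describe. There is no gap, and the caveat you flag about the orientation of $V(\cdot,\cdot)$ is the right thing to watch.

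As for comparison with the paper: there is nothing to compare. The paper does not prove this lemma at all; it is stated with the attribution ``(Lemma 2 of \cite{Ghadimi1})'' and then used directly in the proof of the next lemma. So your proposal supplies a self-contained argument where the authors simply cite the source.
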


\begin{lemma}\label{lemma3.4}
Let $x^*$ be an arbitrary optimal solution of \eqref{orip}. Denote $D_{k-1}^2=\frac{\|x^*-z_{k-1}\|^2}{2}$. Assume that
\begin{eqnarray}\label{assum2}
\alpha_{k}\beta_{k}-\theta_k\leq 0.
\end{eqnarray}
 We have for any $k\geq 1$,
\begin{eqnarray}\label{pu}
\langle\nabla\tilde{\Psi}_{\mu_{k}}^{k},y_{k}-\alpha_{k}x^*-(1-\alpha_{k})y_{k-1}\rangle
\leq -\frac{\beta_{k}}{2}\|y_{k}-x_{k}\|^2
+\alpha_{k}\theta_{k}\left(D_{k-1}^2
-D_{k}^2\right).
\end{eqnarray}
\end{lemma}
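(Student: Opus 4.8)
The plan is to obtain \eqref{pu} from two applications of Lemma~\ref{lemma3.41}, one to the prox-type subproblem defining $z_k$ and one to the subproblem defining $y_k$, where the prox-function is taken to be $\omega(\cdot)=\tfrac12\|\cdot\|^2$ so that the Bregman distance reduces to $V(a,b)=\tfrac12\|a-b\|^2$. With this choice $D_{k-1}^2=V(x^*,z_{k-1})$ and $D_k^2=V(x^*,z_k)$. For brevity write $g_k:=\nabla\tilde\Psi_{\mu_k}^k$.

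First I would handle the $z$-step. Since $z_k=\arg\min_{x\in X}\{\langle g_k,x-x_k\rangle+\theta_k V(z_{k-1},x)\}$ and $x\mapsto\langle g_k,x-x_k\rangle$ is convex, Lemma~\ref{lemma3.41} with $p(\cdot)=\langle g_k,\cdot-x_k\rangle$, $\mu_1=\theta_k$, $\tilde x=z_{k-1}$, $\mu_2=0$, and test point $u=x^*$ gives
\begin{equation*}
\langle g_k, z_k-x_k\rangle+\tfrac{\theta_k}{2}\|z_k-z_{k-1}\|^2\le \langle g_k, x^*-x_k\rangle+\theta_k D_{k-1}^2-\theta_k D_k^2 ,
\end{equation*}
which, after subtracting $\langle g_k, x^*-x_k\rangle$ from both sides so that the $x_k$-terms collapse, becomes
\begin{equation*}
\langle g_k, z_k-x^*\rangle\le \theta_k\bigl(D_{k-1}^2-D_k^2\bigr)-\tfrac{\theta_k}{2}\|z_k-z_{k-1}\|^2 .
\end{equation*}

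Next, for the $y$-step, the key idea is to test the optimality of $y_k$ against the feasible point $u=\alpha_k z_k+(1-\alpha_k)y_{k-1}\in X$ rather than against $x^*$. Because $x_k=\alpha_k z_{k-1}+(1-\alpha_k)y_{k-1}$, one has $u-x_k=\alpha_k(z_k-z_{k-1})$, and Lemma~\ref{lemma3.41} applied to $y_k=\arg\min_{y\in X}\{\langle g_k,y-x_k\rangle+\beta_k V(x_k,y)\}$ with this $u$, after discarding the nonpositive term $-\tfrac{\beta_k}{2}\|y_k-u\|^2$, yields
\begin{equation*}
\langle g_k, y_k-x_k\rangle\le \alpha_k\langle g_k, z_k-z_{k-1}\rangle+\tfrac{\beta_k\alpha_k^2}{2}\|z_k-z_{k-1}\|^2-\tfrac{\beta_k}{2}\|y_k-x_k\|^2 .
\end{equation*}

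Finally I would assemble the estimate. Using $x_k=\alpha_k z_{k-1}+(1-\alpha_k)y_{k-1}$ once more, $y_k-\alpha_k x^*-(1-\alpha_k)y_{k-1}=(y_k-x_k)+\alpha_k(z_{k-1}-x^*)$, so the left-hand side of \eqref{pu} equals $\langle g_k,y_k-x_k\rangle+\alpha_k\langle g_k,z_{k-1}-x^*\rangle$. Substituting the $y$-step bound, noting $\alpha_k\langle g_k,z_k-z_{k-1}\rangle+\alpha_k\langle g_k,z_{k-1}-x^*\rangle=\alpha_k\langle g_k,z_k-x^*\rangle$, substituting the $z$-step bound for this last inner product, and collecting terms gives
\begin{equation*}
\langle g_k, y_k-\alpha_k x^*-(1-\alpha_k)y_{k-1}\rangle\le -\tfrac{\beta_k}{2}\|y_k-x_k\|^2+\alpha_k\theta_k\bigl(D_{k-1}^2-D_k^2\bigr)+\tfrac{\alpha_k}{2}\bigl(\alpha_k\beta_k-\theta_k\bigr)\|z_k-z_{k-1}\|^2 .
\end{equation*}
Since $\alpha_k>0$, assumption \eqref{assum2} makes the last term nonpositive, and \eqref{pu} follows. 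The only nontrivial step is spotting the comparison point $u=\alpha_k z_k+(1-\alpha_k)y_{k-1}$ in the $y$-step together with the identities for $x_k$; after that the argument is routine bookkeeping, and the purpose of \eqref{assum2} is precisely to absorb the leftover $\|z_k-z_{k-1}\|^2$ terms produced by the two prox steps.
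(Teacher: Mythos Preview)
Your proof is correct and follows essentially the same route as the paper's: two applications of Lemma~\ref{lemma3.41} (to the $z$-step with test point $x^*$ and to the $y$-step with the crucial test point $u=\alpha_k z_k+(1-\alpha_k)y_{k-1}$), followed by combining the two bounds and invoking \eqref{assum2} to discard the residual $\tfrac{\alpha_k}{2}(\alpha_k\beta_k-\theta_k)\|z_k-z_{k-1}\|^2$ term. The only cosmetic difference is that you decompose the left-hand side of \eqref{pu} as $\langle g_k,y_k-x_k\rangle+\alpha_k\langle g_k,z_{k-1}-x^*\rangle$ and substitute, whereas the paper writes the $y$-step inequality directly with $y_k-\alpha_k z_k-(1-\alpha_k)y_{k-1}$ on the left and then adds $\alpha_k$ times the $z$-step inequality; the algebra is identical.
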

\begin{proof}
 Letting $p(u)=\langle\nabla\tilde{\Psi}_{\mu_{k}}^{k},u-x_{k}\rangle$, $u=x^*$, $\widetilde{x}=z_{k-1}$, $u^*=z_{k}$, $\mu_1=\theta_{k}$, and $\mu_2=0$ In Lemma \ref{lemma3.41}, we have by the definition of $z_{k}$ that
\begin{eqnarray}\label{z}
\langle\nabla\tilde{\Psi}_{\mu_{k}}^{k},z_{k}-x^*\rangle\leq\theta_{k}\left(D_{k-1}^2
-D_{k}^2\right)-\frac{\theta_{k}\|z_{k}-z_{k-1}\|^2}{2}.
\end{eqnarray}
Similarly, letting $p(u)=\langle\nabla\tilde{\Psi}_{\mu_{k}}^{k},u-x_{k}\rangle$, $u=x\in X$, $\widetilde{x}=x_{k}$, $u^*=y_{k}$, $\mu_1=\beta_{k}$, and $\mu_2=0$ in Lemma \ref{lemma3.41}, we have by the definition of $y_{k}$ thfat
\begin{eqnarray*}
\langle\nabla\tilde{\Psi}_{\mu_{k}}^{k},y_{k}-x\rangle
\leq\frac{\beta_{k}}{2}\left(\|x_{k}-x\|^2-\|y_{k}-x\|^2-\|y_{k}-x_{k}\|^2\right).
\end{eqnarray*}
By choosing $x=\alpha_{k}z_{k}+(1-\alpha_{k})y_{k-1}\in X$ in the above inequality, we obtain
\begin{eqnarray*}
&&\langle\nabla\tilde{\Psi}_{\mu_{k}}^{k},y_{k}-\alpha_{k}z_{k}-(1-\alpha_{k})y_{k-1}\rangle\\
&&~~~\leq\frac{\beta_{k}}{2}\left(\|x_{k}-\alpha_{k}z_{k}-(1-\alpha_{k})y_{k-1}\|^2-\|y_{k}-x_{k}\|^2\right)\\
&&~~~=\frac{\beta_{k}}{2}\left(\alpha_{k}^2\|z_{k-1}-z_{k}\|^2-\|y_{k}-x_{k}\|^2\right),
\end{eqnarray*}
where the last equality follows from the definition of $x_k$ in Algorithm \ref{algo1}.
Multiplying \eqref{z} by $\alpha_{k}$ and then summing it to the above inequality, we have
\begin{eqnarray*}
&&\langle\nabla\tilde{\Psi}_{\mu_{k}}^{k},y_{k}-\alpha_{k}x^*-(1-\alpha_{k})y_{k-1}\rangle\\
&&~~~\leq\frac{\alpha_{k}(\alpha_{k}\beta_{k}-\theta_k)}{2}\|z_{k-1}-z_{k}\|^2
-\frac{\beta_{k}}{2}\|y_{k}-x_{k}\|^2
+\alpha_{k}\theta_{k}\left(D_{k-1}^2
-D_{k}^2\right)\\
&&~~~\leq -\frac{\beta_{k}}{2}\|y_{k}-x_{k}\|^2
+\alpha_{k}\theta_{k}\left(D_{k-1}^2
-D_{k}^2\right),
\end{eqnarray*}
since $\alpha_{k}\beta_{k}-\theta_k\leq 0$ according to (\ref{assum2}).
\end{proof}

For ease of notation, we define
\begin{eqnarray}
\Delta_{\mu_{k-1}}=\tilde{\psi}_{\mu_{k-1}}(y_{k-1})-\tilde{\psi}_{\mu_{k-1}}(x^*),~~~~~~~~~~~~~\label{Deltamuk}\\
\Gamma_{k}=\Delta_{\mu_{k}}-(1-\alpha_{k})\Delta_{\mu_{k-1}}-2\kappa(1-\alpha_{k})\left(\mu_{k-1}-\mu_{k}\right).\label{Gammak+1}
\end{eqnarray}

\begin{lemma}\label{lemma3.5}
Let $x^*$ be an arbitrary optimal solution of \eqref{orip}. Assume that
\begin{eqnarray}\label{assum3}
\alpha_{k}\beta_{k}-\theta_k\leq 0\quad \mbox{and}\quad \beta_{k}-L_{k}>0.
\end{eqnarray}
For any $k\geq 1$ we have
\begin{eqnarray}\label{theorem3.1eq}
\Gamma_{k}\leq\alpha_{k}\theta_{k}\left(D_{k-1}^2
-D_{k}^2\right)+\frac{\|\delta_{\mu_{k}}(x_{k})\|^2}{2(\beta_{k}-L_{k})}
+\alpha_{k}\langle\delta_{\mu_{k}}(x_{k}),x^*-z_{k-1}\rangle.
\end{eqnarray}
\end{lemma}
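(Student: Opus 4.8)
The plan is to chain together the $L_k$-smoothness inequality \eqref{psiLsmoo}, the convexity of $f$ and $\tilde h_{\mu_k}$, the one-step descent estimate \eqref{pu} from Lemma \ref{lemma3.4}, and the $\mu$-relation \eqref{psimurelation} from Lemma \ref{lemma3.2}, so as to bound $\Delta_{\mu_k}=\tilde\psi_{\mu_k}(y_k)-\tilde\psi_{\mu_k}(x^*)$ from above. First I would apply \eqref{psiLsmoo} at $x=x_k$, $y=y_k$ to get
$\tilde\psi_{\mu_k}(y_k)\le \tilde\psi_{\mu_k}(x_k)+\langle\nabla\tilde\psi_{\mu_k}(x_k),y_k-x_k\rangle+\frac{L_k}{2}\|y_k-x_k\|^2$, then rewrite the gradient via \eqref{deltamuk} as $\nabla\tilde\psi_{\mu_k}(x_k)=\nabla\tilde\Psi_{\mu_k}^k-\delta_{\mu_k}(x_k)$, so the linear term becomes $\langle\nabla\tilde\Psi_{\mu_k}^k,y_k-x_k\rangle-\langle\delta_{\mu_k}(x_k),y_k-x_k\rangle$. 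The key idea is then to split $y_k-x_k=\bigl(y_k-\alpha_k x^*-(1-\alpha_k)y_{k-1}\bigr)+\bigl(\alpha_k x^*+(1-\alpha_k)y_{k-1}-x_k\bigr)$; by the definition $x_k=\alpha_k z_{k-1}+(1-\alpha_k)y_{k-1}$ the second piece equals $\alpha_k(x^*-z_{k-1})$, which is exactly the vector that shows up on the right-hand side of \eqref{theorem3.1eq}.

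Next I would estimate $\tilde\psi_{\mu_k}(x_k)+\langle\nabla\tilde\Psi_{\mu_k}^k, \alpha_k x^*+(1-\alpha_k)y_{k-1}-x_k\rangle$. Writing $\nabla\tilde\Psi_{\mu_k}^k=\nabla\tilde\psi_{\mu_k}(x_k)+\delta_{\mu_k}(x_k)$ again and using convexity of $\tilde\psi_{\mu_k}$ on the point $\alpha_k x^*+(1-\alpha_k)y_{k-1}$,
$\tilde\psi_{\mu_k}(x_k)+\langle\nabla\tilde\psi_{\mu_k}(x_k),\alpha_k x^*+(1-\alpha_k)y_{k-1}-x_k\rangle\le \alpha_k\tilde\psi_{\mu_k}(x^*)+(1-\alpha_k)\tilde\psi_{\mu_k}(y_{k-1})$, while the leftover $\delta_{\mu_k}$-term contributes $\alpha_k\langle\delta_{\mu_k}(x_k),x^*-z_{k-1}\rangle$ plus $(1-\alpha_k)\langle\delta_{\mu_k}(x_k),y_{k-1}-x_k\rangle$. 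For the term $\langle\nabla\tilde\Psi_{\mu_k}^k,y_k-\alpha_k x^*-(1-\alpha_k)y_{k-1}\rangle$ I invoke \eqref{pu}, which under the first hypothesis of \eqref{assum3} gives $-\frac{\beta_k}{2}\|y_k-x_k\|^2+\alpha_k\theta_k(D_{k-1}^2-D_k^2)$. Collecting everything, the $\|y_k-x_k\|^2$ coefficient is $\frac{L_k-\beta_k}{2}<0$ by the second hypothesis of \eqref{assum3}, and the residual inner-product terms in $\delta_{\mu_k}(x_k)$ against $y_k-x_k$ (and against $y_{k-1}-x_k$) get absorbed by a Young/Cauchy–Schwarz step of the form $\langle a,b\rangle-\frac{\beta_k-L_k}{2}\|b\|^2\le\frac{\|a\|^2}{2(\beta_k-L_k)}$, producing the term $\frac{\|\delta_{\mu_k}(x_k)\|^2}{2(\beta_k-L_k)}$. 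This yields $\Delta_{\mu_k}\le(1-\alpha_k)\tilde\psi_{\mu_k}(y_{k-1})-(1-\alpha_k)\tilde\psi_{\mu_k}(x^*)+\alpha_k\theta_k(D_{k-1}^2-D_k^2)+\frac{\|\delta_{\mu_k}(x_k)\|^2}{2(\beta_k-L_k)}+\alpha_k\langle\delta_{\mu_k}(x_k),x^*-z_{k-1}\rangle$.

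Finally, I would convert $(1-\alpha_k)\bigl[\tilde\psi_{\mu_k}(y_{k-1})-\tilde\psi_{\mu_k}(x^*)\bigr]$ into $(1-\alpha_k)\Delta_{\mu_{k-1}}$ plus an error of $2\kappa(1-\alpha_k)(\mu_{k-1}-\mu_k)$ by applying \eqref{psimurelation} with the substitution $k\mapsto k-1$ (legitimate since $\mu_k=\frac{c\mu_0}{k+c}$ is monotonically decreasing), and then rearrange using the definition \eqref{Gammak+1} of $\Gamma_k$ to arrive exactly at \eqref{theorem3.1eq}. The main obstacle I anticipate is the careful bookkeeping in the splitting step and in merging the two stray $\delta_{\mu_k}(x_k)$-inner-product terms (one against $y_k-x_k$ from the descent lemma, one against $y_{k-1}-x_k$ from the convexity step) into a single clean bound — one must combine them with the negative quadratic $-\frac{\beta_k-L_k}{2}\|y_k-x_k\|^2$ in just the right way (noting $(1-\alpha_k)(y_{k-1}-x_k)=\alpha_k(1-\alpha_k)(y_{k-1}-z_{k-1})$ via the definition of $x_k$, so that these terms reorganize appropriately) so that only $\frac{\|\delta_{\mu_k}(x_k)\|^2}{2(\beta_k-L_k)}$ and the $\alpha_k\langle\delta_{\mu_k}(x_k),x^*-z_{k-1}\rangle$ term survive; the sign conditions in \eqref{assum3} are exactly what make this possible.
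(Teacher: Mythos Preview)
Your approach is essentially the same as the paper's and will work, but you have introduced a bookkeeping error that creates a phantom obstacle. When you write that the leftover $\delta_{\mu_k}$-term from the convexity step contributes $\alpha_k\langle\delta_{\mu_k}(x_k),x^*-z_{k-1}\rangle$ \emph{plus} $(1-\alpha_k)\langle\delta_{\mu_k}(x_k),y_{k-1}-x_k\rangle$, that second summand is spurious: by the definition $x_k=\alpha_k z_{k-1}+(1-\alpha_k)y_{k-1}$ one has exactly
\[
\alpha_k x^*+(1-\alpha_k)y_{k-1}-x_k=\alpha_k(x^*-z_{k-1}),
\]
so the leftover is precisely $\alpha_k\langle\delta_{\mu_k}(x_k),x^*-z_{k-1}\rangle$ and nothing more. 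Consequently there is only \emph{one} stray noise inner product to absorb, namely $-\langle\delta_{\mu_k}(x_k),y_k-x_k\rangle$ coming from the smoothness step, and it pairs directly with $-\frac{\beta_k-L_k}{2}\|y_k-x_k\|^2$ via the scalar inequality $at-\tfrac{b}{2}t^2\le \tfrac{a^2}{2b}$ to produce $\frac{\|\delta_{\mu_k}(x_k)\|^2}{2(\beta_k-L_k)}$. No reorganization involving $y_{k-1}-z_{k-1}$ is needed; the difficulty you anticipate does not arise. (Had the extra term really been present it would \emph{not} be absorbable by the available negative quadratic, so it is important that it vanish.)

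For comparison, the paper avoids this slip by postponing the substitution $\nabla\tilde\psi_{\mu_k}(x_k)=\nabla\tilde\Psi_{\mu_k}^k-\delta_{\mu_k}(x_k)$: it first applies convexity with the true gradient separately at $x^*$ and at $y_{k-1}$, collapses the three resulting inner products into the single term $\langle\nabla\tilde\psi_{\mu_k}(x_k),\,y_k-\alpha_k x^*-(1-\alpha_k)y_{k-1}\rangle$, and only then replaces $\nabla\tilde\psi_{\mu_k}(x_k)$ by $\nabla\tilde\Psi_{\mu_k}^k-\delta_{\mu_k}(x_k)$. That way the $\delta$-contribution appears once, as $-\langle\delta_{\mu_k}(x_k),\,y_k-\alpha_k x^*-(1-\alpha_k)y_{k-1}\rangle$, which splits into $\langle\delta_{\mu_k}(x_k),x_k-y_k\rangle+\alpha_k\langle\delta_{\mu_k}(x_k),x^*-z_{k-1}\rangle$ immediately. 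With your early substitution you go back and forth between $\nabla\tilde\psi$ and $\nabla\tilde\Psi$, which is where the miscount crept in; once corrected, your argument and the paper's are the same.
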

\begin{proof}
Based on \eqref{psimurelation}, we have
\begin{eqnarray}\label{psimuykminusx}
&&(1-\alpha_{k})\tilde{\psi}_{\mu_{k}}(y_{k-1})-\tilde{\psi}_{\mu_{k}}(x^*)\nonumber\\
&&~~~=~~(1-\alpha_{k})\left[\tilde{\psi}_{\mu_{k}}(y_{k-1})-\tilde{\psi}_{\mu_{k}}(x^*)\right]-\alpha_{k}\tilde{\psi}_{\mu_{k}}(x^*)\nonumber\\
&&~~\overset{\eqref{psimurelation}}{\leq}(1-\alpha_{k})\left[\tilde{\psi}_{\mu_{k-1}}(y_{k-1})-\tilde{\psi}_{\mu_{k-1}}(x^*)\right]
+2\kappa(1-\alpha_{k})\left(\mu_{k-1}-\mu_{k}\right)\nonumber\\
&&~~~~~~~~~~~~-\alpha_{k}\tilde{\psi}_{\mu_{k}}(x^*)\nonumber\\
&&~~\overset{\eqref{Deltamuk}}{\leq}(1-\alpha_{k})\Delta_{\mu_{k-1}}+2\kappa(1-\alpha_{k})\left(\mu_{k-1}-\mu_{k}\right)-\alpha_{k}\tilde{\psi}_{\mu_{k}}(x^*).
\end{eqnarray}
By the convexity of $\tilde{\psi}_{\mu_{k}}$, we have
\begin{eqnarray}\label{convex1}
&&\tilde{\psi}_{\mu_{k}}(x_{k})\leq \tilde{\psi}_{\mu_{k}}(x)-\langle\nabla\tilde{\psi}_{\mu_{k}}(x_{k}),x-x_{k}\rangle,~~\forall x\in X.
\end{eqnarray}
In view of $\tilde{\psi}_{\mu_{k}}$ is an $L_{k}$-smooth function over a given convex set $X$, we have
\begin{eqnarray*}
&&\tilde{\psi}_{\mu_{k}}(y_{k})\nonumber\\
&&~\overset{\eqref{psiLsmoo}}{\leq}\tilde{\psi}_{\mu_{k}}(x_{k})+\langle\nabla\tilde{\psi}_{\mu_{k}}(x_{k}),y_{k}-x_{k}\rangle+\frac{L_{k}}{2}\|y_{k}-x_{k}\|^2\nonumber\\
&&~~~=(1-\alpha_{k})\tilde{\psi}_{\mu_{k}}(x_{k})+\alpha_{k}\tilde{\psi}_{\mu_{k}}(x_{k})+\langle\nabla\tilde{\psi}_{\mu_{k}}(x_{k}),y_{k}-x_{k}\rangle+\frac{L_{k}}{2}\|y_{k}-x_{k}\|^2\nonumber\\
&&~\overset{\eqref{convex1}}{\leq}(1-\alpha_{k})\tilde{\psi}_{\mu_{k}}(y_{k-1})
-(1-\alpha_{k})\langle\nabla\tilde{\psi}_{\mu_{k}}(x_{k}),y_{k-1}-x_{k}\rangle\nonumber\\
&&~~~~~~~~~+\alpha_{k}\tilde{\psi}_{\mu_{k}}(x^*)
-\alpha_{k}\langle\nabla\tilde{\psi}_{\mu_{k}}(x_{k}),x^*-x_{k}\rangle+\langle\nabla\tilde{\psi}_{\mu_{k}}(x_{k}),y_{k}-x_{k}\rangle\nonumber\\
&&~~~~~~~~~~~~+\frac{L_{k}}{2}\|y_{k}-x_{k}\|^2.
\end{eqnarray*}

Subtracting $\tilde{\psi}_{\mu_{k}}(x^*)+(1-\alpha_{k})\Delta_{\mu_{k-1}}+2\kappa(1-\alpha_{k})\left(\mu_{k-1}-\mu_{k}\right)$ from both sides of the above inequality, and according to \eqref{psimuykminusx}, we get
\begin{eqnarray*}
\Gamma_{k}&\leq&(1-\alpha_{k})\tilde{\psi}_{\mu_{k}}(y_{k-1})-\tilde{\psi}_{\mu_{k}}(x^*)-(1-\alpha_{k})\Delta_{\mu_{k-1}}\\
&&~~-2\kappa(1-\alpha_{k})\left(\mu_{k-1}-\mu_{k}\right)
-(1-\alpha_{k})\langle\nabla\tilde{\psi}_{\mu_{k}}(x_{k}),y_{k-1}-x_{k}\rangle\\
&&~~~~~+\alpha_{k}\tilde{\psi}_{\mu_{k}}(x^*)
-\alpha_{k}\langle\nabla\tilde{\psi}_{\mu_{k}}(x_{k}),x^*-x_{k}\rangle+\langle\nabla\tilde{\psi}_{\mu_{k}}(x_{k}),y_{k}-x_{k}\rangle\nonumber\\
&&~~~~~~~~~+\frac{L_{k}}{2}\|y_{k}-x_{k}\|^2\\
&\overset{\eqref{psimuykminusx}}{\leq}&\langle\nabla\tilde{\psi}_{\mu_{k}}(x_{k}),y_{k}-\alpha_{k}x^*-(1-\alpha_{k})y_{k-1}\rangle+\frac{L_{k}}{2}\|y_{k}-x_{k}\|^2\\
&\overset{\eqref{deltamuk}}{=}&\langle\nabla\tilde{\Psi}_{\mu_{k}}^{k}-\delta_{\mu_{k}}(x_{k}),y_{k}-\alpha_{k}x^*-(1-\alpha_{k})y_{k-1}\rangle
+\frac{L_{k}}{2}\|y_{k}-x_{k}\|^2\\
&\overset{\eqref{pu}}{\leq}&\alpha_{k}\theta_{k}\left(D_{k-1}^2
-D_{k}^2\right)-\frac{\beta_{k}-L_{k}}{2}\|y_{k}-x_{k}\|^2
+\langle\delta_{\mu_{k}}(x_{k}),x_{k}-y_{k}\rangle\\
&&~~~~~-\langle\delta_{\mu_{k}}(x_{k}),x_{k}-\alpha_{k}x^*-(1-\alpha_{k})y_{k-1}\rangle\\
&\leq&\alpha_{k}\theta_{k}\left(D_{k-1}^2
-D_{k}^2\right)+\frac{\|\delta_{\mu_{k}}(x_{k})\|^2}{2(\beta_{k}-L_{k})}
+\alpha_{k}\langle\delta_{\mu_{k}}(x_{k}),x^*-z_{k-1}\rangle.
\end{eqnarray*}
The last inequality follows from the fact that $at-\frac{bt^2}{2}\leq\frac{a^2}{2b}$ for any $a,t,b>0$, and we choose special $a=\|\delta_{\mu_{k}}(x_{k})\|$, $t=\|x_{k}-y_{k}\|$, $b=\beta_{k}-L_{k}>0$ here.
\end{proof}

\begin{proposition}
Under Assumption \ref{assumption 1}, we have for any $k\geq 1$,
\begin{eqnarray}\label{prop3.1eq}
E\left[\Gamma_{k}\right]
\leq\alpha_{k}\theta_{k}E\left[D_{k-1}^2
-D_{k}^2\right]+\frac{\sigma^2}{2m_k(\beta_{k}-L_{k})},
\end{eqnarray}
where $\Gamma_{k}$ is defined in \eqref{Gammak+1} and the expectation $E$ is taken with respect to $\xi_{[k]}$.
\end{proposition}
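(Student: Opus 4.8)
The plan is to obtain \eqref{prop3.1eq} by taking expectations term-by-term in the pathwise bound \eqref{theorem3.1eq} of Lemma \ref{lemma3.5} and showing that the two stochastic contributions behave as expected: the squared-noise term is controlled by Lemma \ref{lemma3.1}, while the linear cross term vanishes in expectation. Concretely, I would start from Lemma \ref{lemma3.5} (whose hypotheses \eqref{assum3} we assume throughout), namely
\begin{eqnarray*}
\Gamma_{k}\leq\alpha_{k}\theta_{k}\left(D_{k-1}^2-D_{k}^2\right)+\frac{\|\delta_{\mu_{k}}(x_{k})\|^2}{2(\beta_{k}-L_{k})}+\alpha_{k}\langle\delta_{\mu_{k}}(x_{k}),x^*-z_{k-1}\rangle,
\end{eqnarray*}
and apply the expectation $E[\cdot]$ with respect to $\xi_{[k]}$, using linearity. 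The first term on the right is handled by itself, leaving the two remaining pieces.

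For the squared-noise piece, recall from \eqref{deltamuk} that $\delta_{\mu_{k}}(x_{k})=\nabla\tilde{\Psi}_{\mu_{k}}^{k}-\nabla\tilde{\psi}_{\mu_{k}}(x_{k})$, so Lemma \ref{lemma3.1}(b) gives $E[\|\delta_{\mu_{k}}(x_{k})\|^2]\leq\sigma^2/m_k$; dividing by $2(\beta_{k}-L_{k})>0$ (which is positive by \eqref{assum3}) yields the term $\sigma^2/(2m_k(\beta_{k}-L_{k}))$ in \eqref{prop3.1eq}. For the cross term, I would condition on the history $\xi_{[k-1]}$: the iterate $x_{k}$ is built from $z_{k-1}$ and $y_{k-1}$, which are functions of $\xi_{[k-1]}$ only, so $x^*-z_{k-1}$ is deterministic given $\xi_{[k-1]}$, whereas the fresh mini-batch $\xi^{[m_k]}=\{\xi_{k,1},\ldots,\xi_{k,m_k}\}$ drawn at iterate $k$ is independent of $\xi_{[k-1]}$. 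Hence, using \eqref{deltamuki} and Assumption \ref{assumption 1}(a) on each summand, $E[\delta_{\mu_{k}}(x_{k})\mid\xi_{[k-1]}]=\frac{1}{m_k}\sum_{i=1}^{m_k}E[\delta_{\mu_{k}}^i(x_{k})\mid\xi_{[k-1]}]=0$, so $E[\langle\delta_{\mu_{k}}(x_{k}),x^*-z_{k-1}\rangle\mid\xi_{[k-1]}]=0$, and the tower property makes the unconditional expectation vanish. Combining these two facts with the expectation of the $D$-term gives \eqref{prop3.1eq}.

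The only nonroutine point — the main obstacle — is the measurability bookkeeping in the cross-term argument: one must be careful that although $\delta_{\mu_{k}}(x_{k})$ has a random argument $x_{k}$, that argument is $\xi_{[k-1]}$-measurable and the new samples entering $\nabla\tilde{\Psi}_{\mu_{k}}^{k}$ are independent of it, so the conditional-mean-zero property genuinely applies. Everything else is a direct application of Lemmas \ref{lemma3.5} and \ref{lemma3.1} together with the linearity of expectation, and I would not expect any further difficulty.
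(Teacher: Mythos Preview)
Your proposal is correct and follows essentially the same route as the paper: take expectations in the pathwise bound \eqref{theorem3.1eq} from Lemma \ref{lemma3.5}, use Lemma \ref{lemma3.1}(b) for the squared-noise term, and kill the cross term by conditioning on $\xi_{[k-1]}$ (so that $x^*-z_{k-1}$ is measurable while the fresh mini-batch is independent) and applying the tower property. Your explicit remark that $x_k$ itself is $\xi_{[k-1]}$-measurable is exactly the measurability point needed, and the paper's proof does the same computation.
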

\begin{proof}
Based on Lemma \ref{lemma3.1} (a), we have for any $k\geq 1$
\begin{eqnarray}\label{delta0}
&&E\left[\langle\delta_{\mu_{k}}(x_{k}),x^*-z_{k-1}\rangle\right]\nonumber\\
&&~~=~E\left\{E\left[\left\langle\delta_{\mu_{k}}(x_{k}),x^*-z_{k-1}\right\rangle\vert\xi_{[k-1]}\right]\right\}\nonumber\\
&&~~=~E\left\{\left\langle E\left[\delta_{\mu_{k}}(x_{k})\vert\xi_{[k-1]}\right],x^*-z_{k-1}\right\rangle\right\}\nonumber\\
&&~\overset{\eqref{deltamuk}}{=}E\left\{\left\langle E\left[\nabla\tilde{\Psi}_{\mu_{k}}^{k}\vert\xi_{[k-1]}\right] -\nabla\tilde{\psi}_{\mu_{k}}(x_{k}),x^*-z_{k-1}\right\rangle\right\}\nonumber\\
&&~~=~E\left\{\left\langle\nabla\tilde{\psi}_{\mu_{k}}(x_{k})-\nabla\tilde{\psi}_{\mu_{k}}(x_{k}),x^*-z_{k-1}\right\rangle\right\}\nonumber\\
&&~~=~0,
\end{eqnarray}
where the first equality is based on Theorem 3.24 of \cite{Wasserman} and the expectation $E$ is taken with respect to $\xi_{[k]}$.
Taking the expectation on both sides of \eqref{theorem3.1eq} with respect to $\xi_{[k]}$, and using the observations \eqref{delta0}, and Lemma \ref{lemma3.1} (b), we get \eqref{prop3.1eq} as we desired.
\end{proof}

We can choose special sequences $\{\alpha_{k}\}_{k\geq1}, \{\beta_{k}\}_{k\geq1}$ and $\{\theta_{k}\}_{k\geq1}$ such that the requirements \eqref{assum3} in Lemma \ref{lemma3.5} are satisfied as in the following theorem.

\begin{theorem}\label{prop3.2}
We choose in the AdaSMSAG method (Algorithm \ref{algo1}) the parameters
\begin{eqnarray*}
\alpha_{k}=\frac{1}{k},\ \mu_k=\frac{c\mu_0}{k+c},\ c\in \mathds{Z_+},\ \beta_{k}=2L_{k},\
\theta_{k}\equiv\theta=2L_f+\frac{(c+2)L}{c\mu_0},
\end{eqnarray*} and the batch sizes $m_{k} \equiv m> 1$ for $k =1,\ldots, N$. Under Assumption \ref{assumption 1}, we have for any $k\geq 1$,
\begin{eqnarray}\label{prop3.2eq}
E\left[\Delta_{\mu_{k}}\right]
\leq\frac{2L_fD_{0}^2}{k}+\frac{(c+2)LD_{0}^2}{c\mu_0k}+\frac{2\kappa c\mu_0}{k}\ln\left(k+c\right)+\frac{\sigma^2c\mu_0}{2mL}.
\end{eqnarray}
\end{theorem}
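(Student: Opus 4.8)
The plan is to convert the one-step estimate \eqref{prop3.1eq} into a recursion for $a_k:=E\left[\Delta_{\mu_k}\right]$ and then unroll it by a weighted telescoping sum. First I would verify that with the stated parameters the hypotheses \eqref{assum3} of Lemma~\ref{lemma3.5} (and hence of the Proposition) are met: since $\beta_k=2L_k$ we have $\beta_k-L_k=L_k>0$, and the particular value of $\theta$ is exactly what is needed to guarantee $\alpha_k\beta_k-\theta_k\le 0$, where $\alpha_k\beta_k=2L_k/k$ and $L_k=L_f+L/\mu_k=L_f+L(k+c)/(c\mu_0)$. Then, substituting the definition \eqref{Gammak+1} of $\Gamma_k$ into \eqref{prop3.1eq} and using $1-\alpha_k=(k-1)/k$, $\alpha_k\theta_k=\theta/k$, $\beta_k-L_k=L_k$ and $m_k\equiv m$, one obtains
\begin{eqnarray*}
a_k\le\frac{k-1}{k}a_{k-1}+\frac{2\kappa(k-1)}{k}\left(\mu_{k-1}-\mu_k\right)+\frac{\theta}{k}E\left[D_{k-1}^2-D_k^2\right]+\frac{\sigma^2}{2mL_k}.
\end{eqnarray*}

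Next I would multiply through by $k$, turning this into $k a_k\le(k-1)a_{k-1}+2\kappa(k-1)(\mu_{k-1}-\mu_k)+\theta E[D_{k-1}^2-D_k^2]+k\sigma^2/(2mL_k)$, and sum over $j=1,\dots,k$. The left side telescopes to $k a_k$; the coefficient of $a_0$ is $1-1=0$, so the starting point $y_0$ enters only through the deterministic quantity $D_0^2=\|x^*-z_0\|^2/2=\|x^*-y_0\|^2/2$, which also makes the $k=1$ boundary (where $\alpha_1=1$) harmless. The $\theta$-term telescopes as well: $\sum_{j=1}^{k}\theta E[D_{j-1}^2-D_j^2]=\theta(D_0^2-E[D_k^2])\le\theta D_0^2$. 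Hence
\begin{eqnarray*}
k a_k\le\theta D_0^2+2\kappa\sum_{j=1}^{k}(j-1)(\mu_{j-1}-\mu_j)+\sum_{j=1}^{k}\frac{j\sigma^2}{2mL_j}.
\end{eqnarray*}

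It then remains to bound the two residual sums. For the smoothing sum, $\mu_{j-1}-\mu_j=\frac{c\mu_0}{(j-1+c)(j+c)}$, so $(j-1)(\mu_{j-1}-\mu_j)\le\frac{c\mu_0}{j+c}$, and comparing with an integral (using $c\ge1$) gives $\sum_{j=1}^{k}\frac{1}{j+c}\le\ln(k+c)$, whence $2\kappa\sum_{j=1}^{k}(j-1)(\mu_{j-1}-\mu_j)\le2\kappa c\mu_0\ln(k+c)$. For the noise sum, $L_j\ge L/\mu_j=L(j+c)/(c\mu_0)$, so $j/L_j\le jc\mu_0/(L(j+c))\le c\mu_0/L$ and $\sum_{j=1}^{k}\frac{j\sigma^2}{2mL_j}\le\frac{kc\mu_0\sigma^2}{2mL}$. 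Dividing the last display by $k$ and writing $\theta D_0^2/k=2L_fD_0^2/k+(c+2)LD_0^2/(c\mu_0 k)$ yields precisely \eqref{prop3.2eq}.

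The bookkeeping is otherwise routine; the one point that needs care is the mismatch between the telescoping weight $j$ (the natural coefficient of $\Delta_{\mu_j}$) and the $1/k$ factor carried by $\alpha_k\theta_k$ and by the smoothing gap $2\kappa(1-\alpha_k)(\mu_{k-1}-\mu_k)$ — multiplying by $k$ before summing is exactly what lets the $\theta$-term collapse and keeps the accumulated smoothing differences at the logarithmic order $\ln(k+c)$ rather than a linear one. One should also double-check that the relaxation $(j-1)/(j-1+c)\le1$ and the integral estimate for the harmonic-type sum are legitimate for $c\in\mathds{Z_+}$, and that the per-iteration noise contribution $j/L_j$ is uniformly bounded by $c\mu_0/L$ thanks to the growth of $L_j$ with $k$.
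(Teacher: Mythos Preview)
Your proposal is correct and follows essentially the same route as the paper: verify \eqref{assum3}, rewrite \eqref{prop3.1eq} as a recursion, multiply through by $k$ (the paper equivalently divides by $\alpha_k\theta=\theta/k$) so that both the $\Delta$-terms and the $\theta D^2$-terms telescope, and then bound the smoothing sum by $2\kappa c\mu_0\ln(k+c)$ via a harmonic/integral comparison and the noise sum by $k\sigma^2 c\mu_0/(2mL)$ via $L_j\ge L/\mu_j$. The only differences from the paper are cosmetic bookkeeping.
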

\begin{proof}
In view of $\mu_k=\frac{c\mu_0}{k+c}, c\in \mathds{Z_+}$, we find for any $k\geq 1$,
\begin{eqnarray*}
 \alpha_k\beta_k=\frac{2L_{k}}{k}=\frac{2}{k}\left(L_{f}+\frac{L}{\mu_k}\right)
=2 L_{f}\left(\frac{1}{k}+\frac{1}{c^2}+\frac{1}{ck}\right)\le 2L_f+\frac{(c+2)L}{c\mu_0} = \theta_k,
\end{eqnarray*}
and
\begin{eqnarray*}
\beta_k - L_k = L_k >0.
\end{eqnarray*}
Thus the assumptions in (\ref{assum3}) hold.
 Let us denote for simplicity
\begin{eqnarray}
P_{k}:=2\kappa(1-\alpha_{k})\left(\mu_{k-1}-\mu_{k}\right),\quad\mbox{and}\quad
Q_{k}:=\frac{\sigma^2}{2m(\beta_{k}-L_{k})}.\label{PQ}
\end{eqnarray}
By \eqref{prop3.1eq} and \eqref{PQ}, we have for any $k\geq 1$
\begin{eqnarray*}
E\left[ \Delta_{\mu_{k}}\right]+\alpha_{k}\theta  E\left[D_{k}^2\right]
\overset{\eqref{prop3.1eq}}{\leq}(1-\alpha_k)E\left[\Delta_{\mu_{k-1}}\right]+\alpha_{k}\theta E\left[D_{k-1}^2\right]+P_{k}+Q_{k}.
\end{eqnarray*}
Multiplying both sides of this inequality by $\frac{1}{\alpha_{k}\theta}$, we obtain
\begin{eqnarray}\label{prop3.2-1}
\frac{1}{\alpha_{k}\theta}E\left[\Delta_{\mu_{k}}\right]+E\left[D_{k}^2\right]
\leq\frac{1-\alpha_{k}}{\alpha_{k}\theta}E\left[\Delta_{\mu_{k-1}}\right]
+E\left[D_{k-1}^2\right]+\frac{P_{k}+Q_{k}}{\alpha_{k}\theta}.
\end{eqnarray}
Noticing that  $\alpha_{k}=\frac{1}{k}$ for $k = 1,\ldots, N$, we have for any $k\geq 2$,
\begin{eqnarray*}
\frac{1-\alpha_{k}}{\alpha_{k}}=\frac{1}{\alpha_{k-1}}.
\end{eqnarray*}
According to the above equality and \eqref{prop3.2-1}, we have
\begin{eqnarray}\label{prop3.2-2}
\frac{1}{\alpha_{k}\theta}E\left[\Delta_{\mu_{k}}\right]+E\left[D_{k}^2\right]
&\leq&\frac{1}{\alpha_{k-1}\theta}E\left[\Delta_{\mu_{k-1}}\right]
+E\left[D_{k-1}^2\right]+\frac{P_{k}+Q_{k}}{\alpha_{k}\theta}\nonumber\\
&\leq&\ldots\nonumber\\
&\leq&\frac{1-\alpha_{1}}{\alpha_{1}\theta}E\left[\Delta_{\mu_{0}}\right]+D_{0}^2
+\sum\limits_{i=1}^{k}\frac{P_i+Q_i}{\alpha_i\theta}\nonumber\\
&=&D_{0}^2+\sum\limits_{i=1}^{k}\frac{P_i+Q_i}{\alpha_i\theta}.
\end{eqnarray}

Based on the definitions of $P_{k}$, $Q_{k}$, and the elementary inequalities $\sum\limits_{i=1}^{k+c} \frac{1}{i}<1+\ln (k+c)$ and $\sum\limits_{i=1}^{1+c} \frac{1}{i}\geq\frac{3}{2}$, we get
\begin{eqnarray}
\sum\limits_{i=1}^{k}\frac{P_i}{\alpha_i\theta}
&=&\sum\limits_{i=1}^{k}\frac{2\kappa(1-\alpha_i)\left(\mu_{i}-\mu_{i+1}\right)}{\alpha_i\theta}\nonumber\\
&=&\frac{2\kappa c\mu_0}{\theta}\sum\limits_{i=1}^{k} (i-1)\left(\frac{1}{i+c}-\frac{1}{i+c+1}\right)\nonumber\\
&\leq&\frac{2\kappa c\mu_0}{\theta}\sum\limits_{i=2}^{k} \frac{1}{i+c}\nonumber\\
&\leq&\frac{2\kappa c\mu_0}{\theta}\ln \left(k+c\right),\label{Psum}
\end{eqnarray}
and
\begin{eqnarray}
\sum\limits_{i=1}^{k}\frac{Q_i}{\alpha_i\theta}
&=&\sum\limits_{i=1}^{k}\frac{\sigma^2}{2m(\beta_{i}-L_{i})\alpha_i\theta}\nonumber\\
&=&\frac{\sigma^2}{2m\theta}\sum\limits_{i=1}^{k}\frac{i}{L_f+\frac{L}{\mu_{i}}}\nonumber\\
&\leq&\frac{\sigma^2c\mu_0k}{2m\theta L}.\label{Qsum}
\end{eqnarray}
With the above observations \eqref{Psum}, and \eqref{Qsum}, and multiplying \eqref{prop3.2-2} by $\alpha_{k}\theta=\frac{\theta}{k}=\frac{2L_f}{k}+\frac{(c+2)L}{c\mu_0k}$, we obtain
\begin{eqnarray*}
E\left[\Delta_{\mu_{k}}\right]
\leq\frac{2L_fD_{0}^2}{k}+\frac{(c+2)LD_{0}^2}{c\mu_0k}+\frac{2\kappa c\mu_0}{k}\ln\left(k+c\right)+\frac{\sigma^2c\mu_0}{2mL}.
\end{eqnarray*}
\end{proof}

\begin{corollary}\label{corollary3.1}
Let the parameters $\alpha_k$, $\beta_k$, and $\theta_k$ for $k\ge 1$ in the AdaSMSAG method be set as in Theorem \ref{prop3.2}. Let the number of calls to the $\mathcal{SFO}$ at each iteration of the AdaSMSAG method is
\begin{eqnarray}\label{batch}
m=\left\lceil\frac{\sigma^2N}{4\kappa L\ln \left(N+c\right)}\right\rceil.
\end{eqnarray}
Under Assumption \ref{assumption 1}, we have for any $N\geq1$,
\begin{eqnarray}\label{corollary3.1eq}
E\left[\psi(y_{N})-\psi^*\right]\leq\left[2L_fD_{0}^2+\frac{(c+2)LD_{0}^2}{c\mu_0}+2\kappa c\mu_0\right]\frac{1}{N}+\frac{2\kappa c\mu_0}{N}\ln\left(N+c\right).
\end{eqnarray}
\end{corollary}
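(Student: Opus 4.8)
The plan is to derive the corollary from three results already established: inequality \eqref{psibound} of Lemma \ref{lemma3.2}, the expectation estimate \eqref{prop3.2eq} of Theorem \ref{prop3.2}, and the prescribed batch size \eqref{batch}. First I would specialize \eqref{psibound} to $x=x^*$, an arbitrary optimal solution of \eqref{orip}, and to $k=N$: since $\psi(x^*)=\psi^*$ and, by the definition \eqref{Deltamuk}, $\tilde{\psi}_{\mu_N}(y_N)-\tilde{\psi}_{\mu_N}(x^*)=\Delta_{\mu_N}$, this yields the pointwise bound $\psi(y_N)-\psi^*\le \Delta_{\mu_N}+2\kappa\mu_N$. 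Taking the expectation with respect to $\xi_{[N]}$ on both sides — note that $\mu_N=c\mu_0/(N+c)$ is deterministic, so $2\kappa\mu_N$ is untouched — gives $E[\psi(y_N)-\psi^*]\le E[\Delta_{\mu_N}]+2\kappa\mu_N$.

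Next I would bound the two terms on the right. The parameter choices in the corollary coincide with those of Theorem \ref{prop3.2}, so \eqref{prop3.2eq} applies with $k=N$ and controls $E[\Delta_{\mu_N}]$ by the sum of $\frac{2L_fD_0^2}{N}$, $\frac{(c+2)LD_0^2}{c\mu_0 N}$, $\frac{2\kappa c\mu_0}{N}\ln(N+c)$, and the noise term $\frac{\sigma^2 c\mu_0}{2mL}$. For the last of these I would use $\lceil r\rceil\ge r$, so that \eqref{batch} forces $m\ge \frac{\sigma^2 N}{4\kappa L\ln(N+c)}$, whence $\frac{\sigma^2 c\mu_0}{2mL}\le \frac{2\kappa c\mu_0}{N}\ln(N+c)$; and for the residual smoothing error I would use $2\kappa\mu_N=\frac{2\kappa c\mu_0}{N+c}\le \frac{2\kappa c\mu_0}{N}$. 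Substituting these estimates and grouping the $O(1/N)$ contributions separately from the $O(\ln(N+c)/N)$ contributions produces a bound of the form stated in \eqref{corollary3.1eq}.

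I do not expect a genuine analytic obstacle: all the heavy lifting was done in Lemma \ref{lemma3.2}, the preceding proposition, and Theorem \ref{prop3.2}, and what remains is essentially arithmetic. The only points needing care are (i) checking that the batch size defined by \eqref{batch} is a well-defined positive integer so Algorithm \ref{algo1} is meaningful, and that the ceiling can only help — it enlarges $m$ and hence shrinks $\frac{\sigma^2 c\mu_0}{2mL}$; and (ii) the bookkeeping of constants when the logarithmic term already present in \eqref{prop3.2eq} is merged with the one created by the batch-size substitution, together with the $\frac{2\kappa c\mu_0}{N}$ coming from $2\kappa\mu_N$. Consolidating these is precisely what yields the compact two-term right-hand side of \eqref{corollary3.1eq}.
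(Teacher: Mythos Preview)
Your proposal is correct and follows essentially the same route as the paper: apply \eqref{psibound} at $x=x^*$, $k=N$, take expectations, invoke \eqref{prop3.2eq}, bound $2\kappa\mu_N\le 2\kappa c\mu_0/N$, and absorb the variance term via the batch-size choice \eqref{batch}. Your caution in point (ii) is well placed: carrying the arithmetic through exactly as both you and the paper describe actually yields a coefficient $4\kappa c\mu_0$ (two copies of the logarithmic term) rather than the $2\kappa c\mu_0$ printed in \eqref{corollary3.1eq}, so the discrepancy is in the paper's stated constant, not in your argument.
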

\begin{proof}
In view of \eqref{psibound}, \eqref{Deltamuk} and \eqref{prop3.2eq}, we have
\begin{eqnarray}
&&E\left[\psi(y_{N})-\psi^*\right]\nonumber\\
&&~\overset{\eqref{psibound}}{\leq}E\left[\tilde{\psi}_{\mu_{N}}(y_{N})-\tilde{\psi}_{\mu_{N}}(x^*)\right]+2\kappa\mu_{N}\nonumber\\
&&~\overset{\eqref{Deltamuk}}{=}E\left[\Delta_{\mu_{N}}\right]+2\kappa\mu_{N}\nonumber\\
&&~\overset{\eqref{prop3.2eq}}{\leq}\frac{2L_fD_{0}^2}{N}+\frac{(c+2)LD_{0}^2}{c\mu_0N}+\frac{2\kappa c\mu_0}{N}\ln\left(N+c\right)+\frac{\sigma^2c\mu_0}{2mL}+\frac{2\kappa c\mu_0}{N+c}\nonumber\\
&&~~\leq\left[2L_fD_{0}^2+\frac{(c+2)LD_{0}^2}{c\mu_0}+2\kappa c\mu_0\right]\frac{1}{N}+\frac{2\kappa c\mu_0}{N}\ln\left(N+c\right)+\frac{\sigma^2c\mu_0}{2mL}.\label{calm}
\end{eqnarray}

By setting in \eqref{calm}
\begin{eqnarray*}
m=\left\lceil\frac{\sigma^2N}{4\kappa L\ln \left(N+c\right)}\right\rceil,
\end{eqnarray*}
we obtain \eqref{corollary3.1eq} as we desired.
\end{proof}
\begin{remark}Since ${\ln}(N+c)< \sqrt{N+c}$, we get
\begin{eqnarray*}
\frac{\ln (N+c)}{N} < \frac{\sqrt{N+c}}{N} = \frac{1}{\sqrt{N+c}} \frac{N+c}{N}.
\end{eqnarray*}
From \eqref{corollary3.1eq}, in order to get an $\epsilon$-approximate solution, we need
\begin{eqnarray*}
E[\psi(y_N)-\psi^*] \le O\left(\frac{\ln (N+c)}{N}\right) \le \epsilon.
\end{eqnarray*}
Thus the order of worst-case iteration complexity for finding an $\epsilon$-approximate solution
is better than $O(\frac{1}{\epsilon^2})$ obtained by the state-of-the-art stochastic approximation methods \cite{Nemirovski,Lan1,Ghadimi1,Ghadimi2}.
\end{remark}

\section{Numerical results}
In this section, we do several numerical experiments to illustrate the efficiency of the AdaSMSAG method by comparing it with several state-of-the-art stochastic approximation methods. We consider a risk management in portfolio optimization, and a family of Wasserstein distributionally robust support vector machine (DRSVM) problems using real data.

We compare our AdaSMSAG method with the following stochastic approximation methods.\\
$\bullet$ The mini-batch stochastic Nesterov's smoothing (MSNS) method \cite{Wang}.\\
$\bullet$ The mini-batch mirror descent stochastic approximation (M-MDSA) method \cite{Nemirovski}.\\
$\bullet$ The randomized stochastic projected gradient (RSPG) method \cite{Ghadimi}.\\
$\bullet$ The two-phase RSPG (2-RSPG) method \cite{Ghadimi}.\\
$\bullet$ The two-phase RSPG variant (2-RSPG-V) method \cite{Ghadimi}.

The MSNS method, the M-MDSA method, the RSPG method, the 2-RSPG method, and the 2-RSPG-V method are state-of-the-art stochastic approximation methods. The MSNS method can solve the first application, i.e., the risk management on portfolio optimization since its nonsmooth component has an explicit max structure. The other four methods can not solve the nonsmooth model defined in \eqref{orip} with guaranteed convergence, because of the relatively complex nonsmooth term that causes the difficulty of obtaining its proximal operator. For comparison, we apply these four methods to solve the smooth counterpart in \eqref{smoothp} with fixed smoothing parameter $\mu\equiv0.001$. In fact, we tried $\mu=0.1,0.01,0.001,0.0001$, and found that $\mu=0.001$ provides the smallest objective value of the original problem \eqref{orip} corresponding to the training data. It's worth noting that the 2-RSPG method and the 2-RSPG-V method include two phases, i.e., the optimization phase and the post-optimization phase. In either the 2-RSPG method, or the 2-RSPG-V method, it generates several candidate outputs in the optimization phase, and the final output is selected from these candidate outputs according to some rules in the post-optimization phase. Since the MDSA method calls one random vector per iteration (the batch size is equal to 1), the total running time is relatively long. We therefore use the mini-batch MDSA method (M-MDSA) to replace the MDSA method in comparison as done in \cite{Ghadimi}. The batch sizes of the M-MDSA method are set to be the same as that for our AdaSMSAG method.

We denote by $N_{total}$ the total number of calls to the stochastic first-order oracle. For each problem, $N_{total}$ are set to be the same for different methods. Since the RSPG, 2-RSPG and 2-RSPG-V methods are randomized SA methods, they stop randomly before up to the maximum number of calls. We then set $N_{total}$ for these three methods to be the maximum number of calls to the stochastic oracle performed in the optimization phase.

Denote by $NS$ the number of the training samples. We use the $NS$ training samples to estimate the parameter, $\sigma^2$. We follow the way of estimating the parameter $\sigma^2$ as in \cite{Ghadimi}. Using the training samples, we compute the stochastic gradients of the objective function $\lceil NS/100\rceil$ times at 100 randomly selected points and then take the average of the variances of the stochastic gradients for each point as an estimation of $\sigma^2$.

Our experiments performed in MATLAB R2018b on a laptop with Windows 10, 1 dualcore 2.4 GHz CPU and 8 GB of RAM. All reported CPU times are in seconds.

\subsection{Risk management in portfolio optimization}

The first application is the risk management on portfolio optimization \cite{Huang1}. Suppose that there are $B$ assets and their random return rates are denoted as $R=\left(R_{1}, R_{2}, \ldots, R_{B}\right)^T$. Assume that for each asset $b=1, \ldots, B$, there are many realizations for its random return rate, with each denoted as $r_{b}$. Denote the random vector $\mathbf{r}=(r_{1},r_{2},\ldots,r_{B})^T$. Let
\begin{eqnarray*}
\mathcal{Z}=\left\{\mathbf{z} \in \mathds{R}^{B}: \sum_{b=1}^{B} z_{b}=1, \mathbf{z} \geq 0\right\}
\end{eqnarray*}
be the set of all feasible portfolio, where $z_{b}$ is the proportion of wealth allocated to asset $b$.

In risk-aware portfolio optimization, the decision maker optimizes an expectation risk measure objective. Conditional Value-at-Risk (CVaR) is one of the most widely investigated risk measures (see \cite{Rock,Krokhmal}). 
According to \cite{Huang1}, the $\alpha$-level CVaR objective of portfolio optimization is
\begin{eqnarray}\label{cvar}
\min _{\mathbf{z} \in \mathcal{Z}, \eta \in \mathds{R}} E_{\mathbf{r}}\left[\eta+\frac{1}{1-\alpha}\left(-\sum_{b=1}^{B} r_{b} z_{b}-\eta\right)_{+}\right].
\end{eqnarray}
The objective function in \eqref{cvar} is convex and nonsmooth. In particular, the problem will be large-scale when the number $N$ of realizations for the random vector $\mathbf{r}$ is large. We can then apply our method to tackle this large-scale risk-aware portfolio optimization problem. We employ the Neural Networks smoothing function in \cite{Chen1} and obtain the smoothing problem of \eqref{cvar} as
\begin{eqnarray}\label{smoocvar}
\min _{\mathbf{z} \in \mathcal{Z}, \eta \in \mathds{R}} E_{\mathbf{r}}\left[\eta+\frac{\mu}{1-\alpha}\ln\left(1+e^\frac{- \mathbf{r}^T \mathbf{z}-\eta}{\mu}\right)\right],
\end{eqnarray}
where $\mu$ is the smoothing parameter.

Here, we choose the risk level $\alpha=0.05$. Our data pool consists of the daily returns of exchange-traded funds (ETFs) for B = 20 assets\footnotemark[1]\footnotetext[1]{The abbreviations of these 20 assets are: DIA, EEM, EFA, EWJ, EWZ, FXI, IWM, IYR, OIH, QQQ, SMH, SPY, VWO, XLB, XLE, XLF, XLI, XLK, XLP, XLU.} from October 2007 to October 2017 obtained from the Yahoo Finance website. 
The training set contains the daily returns from October 2007 to September 2017. The out-of-sample performance is based on the return in October 2017.

We compare the out-of-sample performance of the portfolio using the two standard criteria in finance \cite{DeMiguel,Shen}:
(i) Sharpe ratio; (ii) cumulative wealth. These two evaluation metrics represent different focuses on measuring the portfolio performance. The Sharpe ratio (SR) measures the reward-to-risk ratio of a portfolio strategy, which is computed as the portfolio return normalized by its standard deviation:
\begin{eqnarray}\label{SR}
\mathrm{SR}=\frac{\hat{\mu}}{\hat{\sigma}}, \quad \hat{\mu}=\frac{1}{T} \sum_{t=0}^{T-1} \mu_{t}, \quad \hat{\sigma}=\sqrt{\frac{1}{T} \sum_{t=0}^{T-1}\left(\mu_{t}-\hat{\mu}\right)^{2}},
\end{eqnarray}
where $\mu_t$ is the portfolio return from time $t$ to $t+1$, which can be easily calculated as $\mu_t=\omega_{t}^T R_{t}$, $R_{t}$ is the price changes from time $t$ to $t+1$, $\hat{\mu}$ is the mean of portfolio returns and $\hat{\sigma}$ represents the standard deviation of portfolio returns.

The cumulative wealth $(\mathrm{CW})$ of a portfolio measures the total profit from the portfolio strategy across investment periods without considering any risks and costs, which is computed by
\begin{eqnarray}\label{CW}
\mathrm{CW}=\prod_{t=0}^{T-1} \omega_{t}^T R_{t}.
\end{eqnarray}

In each problem, we consider the $\epsilon$-approximate solution with $\epsilon=0.05$ and $0.01$, respectively. For each problem, we run 20 times and record the average results. Over the 20 runs, the average values of the total number $N$ of iterations and the batch sizes $m$, as well as the parameters $L_f, L, \sigma^2$ of the AdaSMSAG method are listed in Table \ref{tab:1}.

\begin{table}[h]
\begin{center}
\begin{minipage}{174pt}
\caption{The average parameters over 20 runs}\label{tab:1}
\begin{tabular}{@{}cccccc@{}}
\toprule
$\epsilon$     & $m$ & $N$ & $L_f$ & $L$  & $\sigma^2$  \\ \midrule
0.05    & 110   & 458 & 1  & 0.25& 1.0081\\ \midrule
0.01    & 520  &2813 & 1  & 0.25& 1.0089  \\
\botrule
\end{tabular}
\end{minipage}
\end{center}
\end{table}

We draw the curves of the average objective values corresponding to the training data v.s. the CPU time for the AdaSMSAG, MSNS, M-MDSA and RSPG methods. We do not include the curves of the 2-RSPG and 2-RSPG-V methods in Fig. \ref{fig:1}, because they have several candidate outputs in optimization phase. Due to scalability, the curves of the AdaSMSAG and the MSNS methods are close at the end. We also provide small graph for each subfigure to see clear the differences of the two methods. We can see that the AdaSMSAG method provides the computed solution with the smallest objective values corresponding to the training data.

\begin{figure}[h]
\subfigure[ $\epsilon=0.05$]
{
\includegraphics[height=4.4cm]{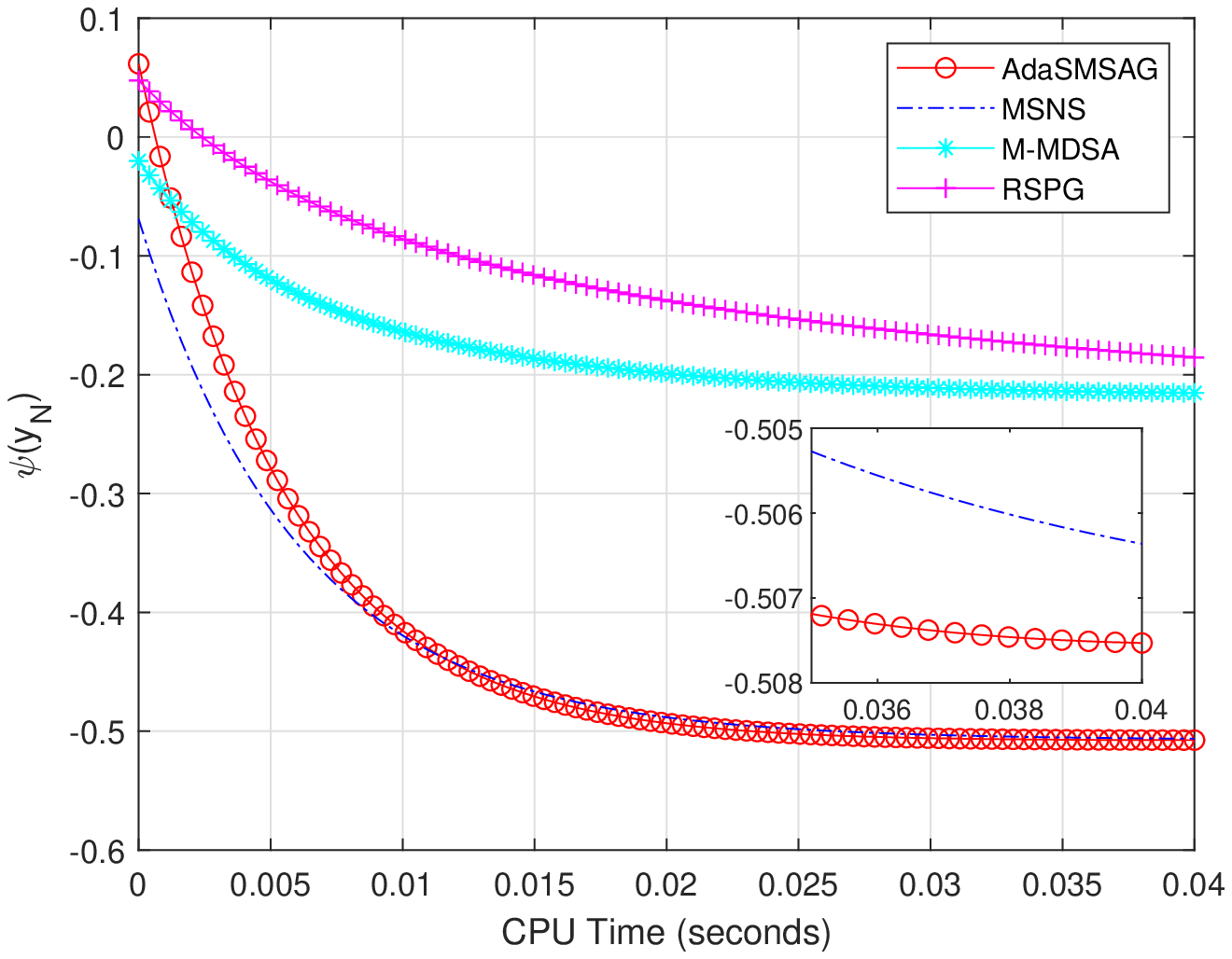}
}\quad
\subfigure[ $\epsilon=0.01$]
{
\includegraphics[height=4.4cm]{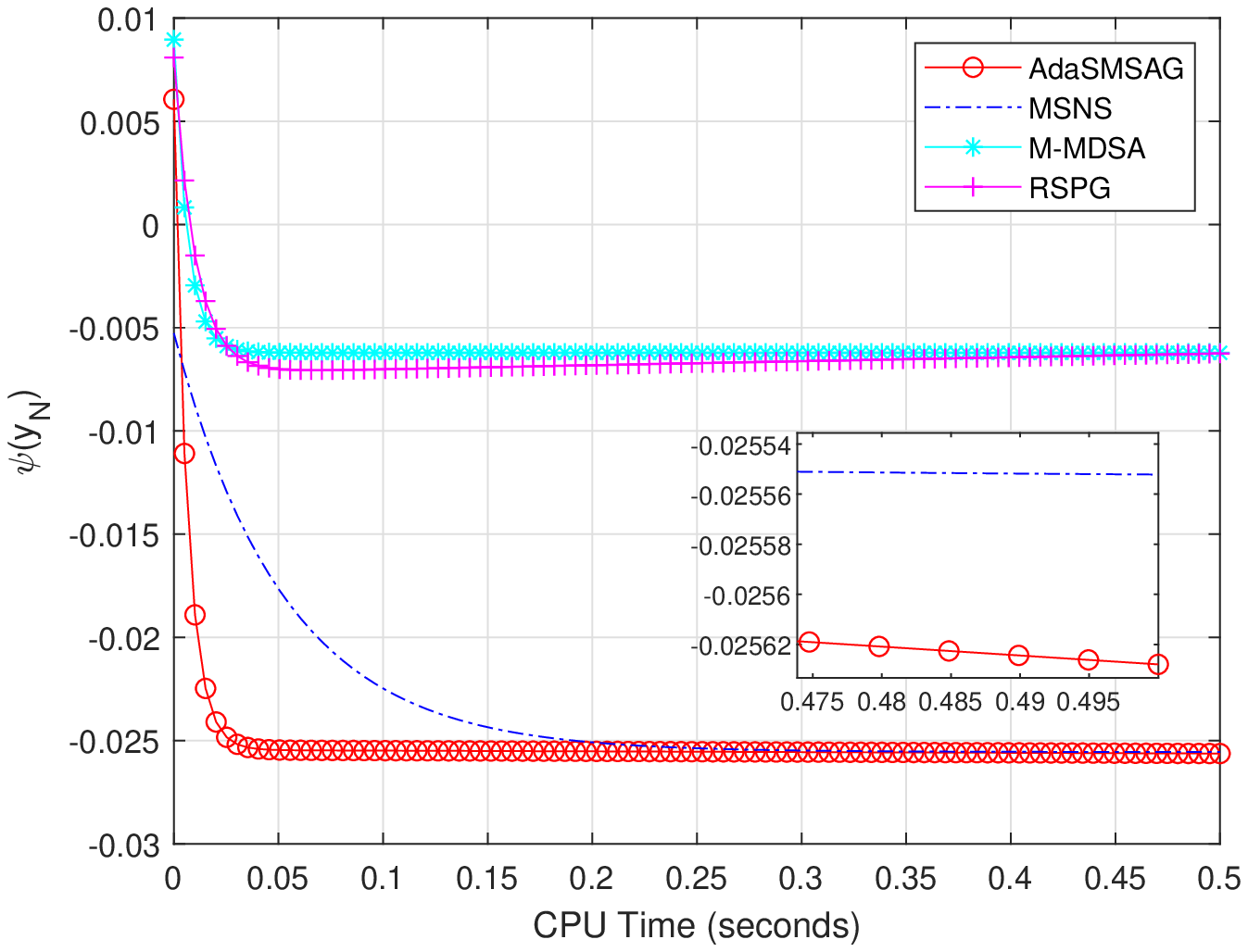}
}
\caption{Average objective values corresponding to the training data v.s. CPU time of 20 runs.}\label{fig:1}
\end{figure}

To see stability, we show in Table \ref{tab:2} the average CPU time, SR and CW, over 20 runs, corresponding to the testing data at the computed solution $\widehat{x}$ by a certain method. From Table \ref{tab:2}, the average CPU time, Sharpe ratio and cumulative wealth of the AdaSMSAG method are the best than those of the other methods in all the cases.

\begin{table}[h]
\begin{center}
\begin{minipage}{225pt}
\caption{The mean SR, CW and CPU time of 20 runs}\label{tab:2}
\begin{tabular}{@{}ccccc@{}}
\toprule
\multirow{2}{*}{$\epsilon$}  & \multirow{2}{*}{ALG.} &Time&SR &CW  \\
 &  & (Avg.)   & (Avg.) & (Avg.) \\\midrule
\multirow{6}{*}{0.05}
&AdaSMSAG&\textbf{0.0751}&\textbf{187.1667}&\textbf{5.0649}\\
&MSNS&0.7778&175.0285&4.8300\\
&M-MDSA&0.0776&170.6928&2.1473\\
&RSPG&3.3500&166.0503&2.4966\\
&2-RSPG&0.6081&166.3938&2.6216\\
&2-RSPG-V&0.6237&169.8957&2.6340\\\midrule
\multirow{6}{*}{0.01}
&AdaSMSAG&\textbf{0.5872}&\textbf{214.9344}&\textbf{5.8409}\\
&MSNS&27.0953&213.1842&5.0811\\
&M-MDSA&0.6264&187.1180&2.9408\\
&RSPG&30.1006&187.1667&2.6519\\
&2-RSPG&7.6664&196.4444&2.6889\\
&2-RSPG-V&6.6914&196.7941&2.6741\\
\botrule
\end{tabular}
\end{minipage}
\end{center}
\end{table}

\subsection{Wasserstein distributionally robust support vector machine (DRSVM) problem}
We consider a family of Wasserstein distributionally robust support vector machine (DRSVM) problems. The support vector machine (SVM) is one of the most frequently used classification methods and has enjoyed notable empirical successes in machine learning and data analysis \cite{Cauwenberghs,Noble,Durgesh,Rodriguez,Huang,shaoyuanhai,Singla,Cervantes}. However, there are very few literatures addressing the development of fast algorithms for its Wasserstein DRO formulation, which takes the form
\begin{eqnarray}\label{DRSVMorip}
\inf \limits_{w}\left\{\frac{\tau}{2}\|w\|^{2}+\sup\limits_{\mathds{Q} \in B_{\varepsilon}\left(\hat{\mathds{P}}_{n}\right)} \mathds{E}_{(x, y) \sim \mathds{Q}}\left[\ell_{w}(x, y)\right]\right\}.
\end{eqnarray}
Here, $\frac{\tau}{2}\|w\|_{2}^{2}$ is the regularization term with $\tau\geq 0$, $x \in \mathds{R}^{d}$ denotes a feature vector and $y \in\{-1,+1\}$ is the associated label, $\ell_{w}(x, y)=\max \left\{1-y w^{T} x, 0\right\}$ is the hinge loss w.r.t. the random vector $\xi=(x, y)$ and the vector $w \in \mathds{R}^{d}$. The $n$ training samples $\left\{\left(\hat{x}_{i}, \hat{y}_{i}\right)\right\}_{i=1}^{n}$ are independently and identically drawn from an unknown distribution $\mathds{P}^{*}$ on the space $\mathcal{Z}=\mathds{R}^{d} \times\{+1,-1\}$ and $z_{i}=\left(\hat{x}_{i}^T,~ \hat{y}_{i}\right)^T$, $\hat{\mathds{P}}_{n}$ 
is the empirical distribution associated with the training samples. The ambiguity set
\begin{eqnarray*}
B_{\epsilon}\left(\hat{\mathds{P}}_{n}\right)=\left\{\mathds{Q} \in \mathcal{P}(\mathcal{Z}): W\left(\mathds{Q}, \hat{\mathds{P}}_{n}\right) \leq \epsilon\right\}
\end{eqnarray*}
is defined on the space of probability distributions $\mathcal{P}(\mathcal{Z})$ centered at the empirical distribution $\hat{\mathds{P}}_{n}$ and has radius $\epsilon \geq 0$ w.r.t. Wasserstein distance
\begin{eqnarray*}
W\left(\mathds{Q}, \hat{\mathds{P}}_{n}\right)=\inf _{\Pi \in \mathcal{P}(\mathcal{Z} \times \mathcal{Z})}&&\left\{\int_{\mathcal{Z} \times \mathcal{Z}} d\left(\xi, \xi^{\prime}\right) \Pi\left(\mathrm{d} \xi, \mathrm{d} \xi^{\prime}\right): \right.\\
&&~~~\left.\Pi(\mathrm{d} \xi, \mathcal{Z})=\mathds{Q}(\mathrm{d} \xi), \Pi\left(\mathcal{Z}, \mathrm{d} \xi^{\prime}\right)=\hat{\mathds{P}}_{n}\left(\mathrm{d} \xi^{\prime}\right)\right\},
\end{eqnarray*}
where $d\left(\xi, \xi^{\prime}\right)=\Vert x-x^{\prime}\Vert+\frac{\kappa}{2}\vert y-y^{\prime}\vert$ is the transport cost between two data points $\xi, \xi^{\prime} \in \mathcal{Z}$ with $\kappa \geq 0$ representing the relative emphasis between feature mismatch and label uncertainty.

In \cite{AMC}, it is pointed out that \eqref{DRSVMorip} can be reformulated as
\begin{eqnarray}\label{DRSVM}
&\min\limits_{w, \lambda}&\lambda \varepsilon+\frac{\tau}{2}\|w\|^{2}+E_i \left[\max \left\{1-w^{T} z_{i}, 1+w^{T} z_{i}-\lambda \kappa, 0\right\}\right],\nonumber\\
&\textrm{s.t.}&\|w\|\leq \lambda.
\end{eqnarray}
We employ the Neural Networks smoothing function in \cite{Chen1} and obtain the smoothing problem of \eqref{DRSVM} as
\begin{eqnarray}\label{smooDRSVM}
&\min\limits_{w, \lambda}&\lambda \varepsilon+\frac{\tau}{2}\|w\|^{2}+E_i \left[\mu\ln\left(e^{\frac{1-w^{T} z_{i}}{\mu}}+e^{\frac{1+w^{T} z_{i}-\lambda \kappa}{\mu}}+1\right)\right],\nonumber\\
&\textrm{s.t.}&\|w\|\leq \lambda,
\end{eqnarray}
where $\mu$ is the smoothing parameter.

It is well known that the projection on the icecream cone has a closed form formula, which is given as follows:
\begin{lemma}(Theorem 3.3.6 of \cite{Bauschke})
Let $C$ be the icecream cone $\{(x, r) \in X \times \mathds{R}:\|x\| \leq  r\}$. Then for every $(x, r) \in X \times \mathds{R}$:
\begin{eqnarray*}
P_{C}(x, r)=\left\{ \begin{aligned}
&(x,r), &\|x\|\leq r,\\
&(0,0), &r<\|x\|<-r, \\
&\frac{\|x\|+r}{2}\left(\frac{x}{\|x\|}, 1\right), &\|x\|\geq \vert r\vert.
\end{aligned}\right.
\end{eqnarray*}
\end{lemma}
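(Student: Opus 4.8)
The plan is to treat this as the standard computation of the projection onto a closed convex cone, carried out via the variational characterization of projections together with the polar cone of $C$. Write $v=(x,r)$ and observe that $C=\{(u,t)\in X\times\mathds{R}:\|u\|\le t\}$ is a nonempty closed convex cone, so $P_C(v)$ exists, is unique, and is the only point $p$ satisfying $p\in C$ and $\langle v-p,\,c-p\rangle\le 0$ for every $c\in C$. First I would compute the polar cone $C^{\circ}:=\{w:\langle w,c\rangle\le 0\ \forall c\in C\}$: maximizing $\langle(u,t),(y,s)\rangle=\langle u,y\rangle+ts$ over $\|u\|\le t$, $t\ge 0$ yields the value $\sup_{t\ge0}t(\|y\|+s)$, so $C^{\circ}=\{(y,s):\|y\|\le -s\}=-C$. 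The three branches of the claimed formula then correspond exactly to $v\in C$, to $v\in C^{\circ}$, and to $v\notin C\cup C^{\circ}$.

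The first two branches are immediate. If $\|x\|\le r$ then $v\in C$ and $P_C(v)=v$. If $v\in C^{\circ}$ (the condition $\|x\|\le -r$, written $r<\|x\|<-r$ in the statement), then $0\in C$ and $\langle v-0,\,c-0\rangle=\langle v,c\rangle\le 0$ for all $c\in C$ by the definition of $C^{\circ}$, hence $P_C(v)=(0,0)$. For the remaining regime $\|x\|>|r|$ — which in particular forces $x\ne 0$, so that $x/\|x\|$ is well defined — I would propose the candidate
\[
p:=\tfrac{\|x\|+r}{2}\Bigl(\tfrac{x}{\|x\|},\,1\Bigr),\qquad q:=v-p=\tfrac{\|x\|-r}{2}\Bigl(\tfrac{x}{\|x\|},\,-1\Bigr),
\]
and then verify: (i) $p\in C$, since its spatial part has norm $\tfrac{\|x\|+r}{2}$, equal to its last coordinate, and $\|x\|+r>0$; (ii) $q\in C^{\circ}$, since its spatial part has norm $\tfrac{\|x\|-r}{2}>0$ while its last coordinate is $-\tfrac{\|x\|-r}{2}$; (iii) $\langle p,q\rangle=\tfrac{(\|x\|+r)(\|x\|-r)}{4}-\tfrac{(\|x\|+r)(\|x\|-r)}{4}=0$. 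With these in hand, for any $c\in C$ we get $\langle v-p,\,c-p\rangle=\langle q,c\rangle-\langle q,p\rangle=\langle q,c\rangle\le 0$ since $q\in C^{\circ}$, so $p=P_C(v)$. (Equivalently, (i)--(iii) exhibit $v=p+q$ as the orthogonal Moreau decomposition of $v$ along $C$ and $C^{\circ}$; a direct alternative is to use rotational symmetry to reduce the defining minimization to the $2$-dimensional problem of projecting $(\|x\|,r)$ onto $\{(\lambda,t):0\le\lambda\le t\}$ and solving the resulting quadratic.)

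Finally I would check that the three expressions agree on the overlaps of the stated regions — e.g. when $\|x\|=r>0$ the third formula collapses to $v$, and when $\|x\|=-r>0$ it collapses to $(0,0)$ — so the piecewise definition is unambiguous. There is no deep obstacle: the only real care is in the bookkeeping of the boundary/equality cases and in noting that the degenerate direction $x=0$ is automatically excluded from the third branch; beyond that, the argument is the elementary algebra verifying $p\in C$, $q\in C^{\circ}$, and $\langle p,q\rangle=0$.
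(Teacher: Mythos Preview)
Your proof is correct and complete. However, the paper does not supply its own proof of this lemma: it is simply quoted from Bauschke's thesis (Theorem~3.3.6 of \cite{Bauschke}) and invoked only so that the projection onto the second-order-cone constraint $\|w\|\le\lambda$ in \eqref{smooDRSVM} can be evaluated in closed form. So there is nothing in the paper to compare your argument against.

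That said, your Moreau-decomposition approach---computing $C^{\circ}=-C$ and then exhibiting, in the nontrivial branch, the orthogonal splitting $v=p+q$ with $p\in C$, $q\in C^{\circ}$, $\langle p,q\rangle=0$---is exactly the standard way this result is proved (and is essentially how Bauschke does it). Your checks of the boundary overlaps and of the exclusion $x\ne 0$ in the third branch are appropriate; the proof has no gaps.
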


We do numerical experiments on four real datasets described below.

$\bullet$ Pima Indians Diabetes dataset comes from the UCI repository, downloaded from the website\footnotemark[2]\footnotetext[2]{https://archive.ics.uci.edu/ml/datasets/Pima+Indians+Diabetes}. It is a dataset with 768 observations and 8 variables. A population of women who were at least 21 years old, of Pima Indian heritage and living near Phoenix, Arizona, was tested for diabetes according to World Health Organization's criteria. The data were collected by the US National Institute of Diabetes and Digestive and Kidney Diseases.

$\bullet$ Wisconsin breast cancer dataset from the UCI repository (699 patterns) can be downloaded from the website\footnotemark[3]\footnotetext[3]{https://archive.ics.uci.edu/ml/datasets/Breast+Cancer+Wisconsin+(Diagnostic)}. Features are computed from a digitized image of a fine needle aspirate (FNA) of a breast mass. They describe characteristics of the cell nuclei present in the image.

$\bullet$ Image Segmentation(B) is 2-class versions of Image Segmentation dataset, i.e., the first 3 images and the remaining ones. Image Segmentation dataset also comes from the UCI repository\footnotemark[4]\footnotetext[4]{https://archive.ics.uci.edu/ml/datasets/Image+Segmentation}. The instances were drawn randomly from a database of 7 outdoor images. The images were handsegmented to create a classification for every pixel.

$\bullet$ MnistData-10(B) is 2-class versions of MnistData-10 dataset, i.e., the first 5 digits and the remaining ones. Mnist dataset of handwritten digits, available from the website\footnotemark[5]\footnotetext[5]{http://yann.lecun.com/exdb/mnist}. MnistData-10 dataset sampled $10\%$ from Mnist dataset. The digits have been size-normalized and centered in a fixed-size image. The details of the described datasets are resumed in Table \ref{tab:4}.

\begin{table}[h]
\begin{center}
\begin{minipage}{245pt}
\caption{Details of the datasets}\label{tab:4}
\begin{tabular}{@{}cccc@{}}
\toprule
Dataset  & Classes &Sample size &Dimension        \\ \midrule
Pima  & 2       & 768 &8  \\
Wisconsin breast cancer  & 2       & 699 & 10  \\
Image Segmentation(B)  & 2       &2310 & 19  \\
MnistData-10(B)  & 2       &6996 &784 \\
\botrule
\end{tabular}
\end{minipage}
\end{center}
\end{table}

We choose the values of parameters $\tau$ and $\varepsilon$ via the 3-fold cross-validation (CV) using 20 random runs, which are determined by varying them on the grid $\{2,1.5,1,0.5,0.1\}$ and the values with the best average accuracy are chosen for each of the AdaSMSAG, M-MDSA, RSPG, 2-RSPG, and 2-RSPG-V methods. In each problem, $\kappa=1$, $L_f=\frac{\tau}{2}$, and $L=\frac{1}{4}$, and we try to find the $\epsilon$-approximate solution with $\epsilon=0.05$.

We record in Table \ref{tab:7} the parameters $\tau$ and $\varepsilon$ we find by the 3-fold CV, together with the corresponding average accuracy (Acc) of the computed solution and the average CPU time to find the computed solution in seconds, where
\begin{eqnarray*}
\mathrm{Acc}
&=&\frac{\mathrm{the~number~of~correctly~predicted~data\ (\sharp \mathcal{S})}}{\mathrm{the~number~of~total~testing~data}\ (J)}\times 100\%.
\end{eqnarray*}
We can see that our AdaSMSAG method has the best average accuracy in all the datasets. The CPU times of the AdaSMSAG method are the shortest for three datasets. For Wisconsin breast cancer dataset, it is the second shortest one, which is acceptable.

\begin{table}[h]
\begin{center}
\begin{minipage}{270pt}
\caption{The values of parameters $\tau$ and $\varepsilon$, and the Acc and the CPU time determined by 3-fold CV}\label{tab:7}
\begin{tabular}{@{}cccccc@{}}
\toprule
Dataset
& ALG.     & $\tau$ &$\varepsilon$ & Acc & CPU      \\\midrule
\multirow{5}{*}{\begin{tabular}[c]{@{}c@{}}Pima Indians\\Diabetes\end{tabular}}
& AdaSMSAG     &0.1 &0.1& \textbf{0.7560} &\textbf{2.6130}\\
& M-MDSA   &0.1 &0.1& 0.5970 & 2.7961\\
& RSPG     &0.1 &1.5& 0.6074 & 16.3706\\
& 2-RSPG   &0.1 &0.5& 0.6322 & 11.8550\\
& 2-RSPG-V &0.1 &0.1& 0.6251 & 12.3136\\ \midrule
\multirow{5}{*}{\begin{tabular}[c]{@{}c@{}}Wisconsin breast\\cancer\end{tabular}}
& AdaSMSAG     &0.1 &0.1& \textbf{0.9888} &1.1035\\
& M-MDSA   & 0.1&0.1& 0.9470 & 1.1356\\
& RSPG     & 1.5&0.5& 0.9269 & 0.8306\\
& 2-RSPG   & 0.1&0.5& 0.9428 & 12.8805\\
& 2-RSPG-V & 1&2& 0.9381 & \textbf{0.1680}\\  \midrule
\multirow{5}{*}{\begin{tabular}[c]{@{}c@{}}Image\\Segmentation(B)\end{tabular}}
& AdaSMSAG     &0.5 &1.5& \textbf{0.6705} &\textbf{4.0589}\\
& M-MDSA   & 0.1&0.5& 0.4261 & 76.7589\\
& RSPG     & 2&1.5& 0.4859 & 12.3613\\
& 2-RSPG   & 2&2& 0.5579 & 4.1024\\
& 2-RSPG-V & 2&0.1& 0.5620 &5.9542\\  \midrule
\multirow{5}{*}{\begin{tabular}[c]{@{}c@{}}MnistData-10(B)\end{tabular}}
& AdaSMSAG     &2 &2& \textbf{0.7256} &\textbf{26.1302}\\
& M-MDSA   & 0.1&1.5& 0.5829 & 2020.9000\\
& RSPG     & 1.5&1.5& 0.5569 & 240.9487\\
& 2-RSPG   & 1&0.1& 0.6140 & 50.7536\\
& 2-RSPG-V & 1&1.5& 0.6343 & 45.7028\\
\botrule
\end{tabular}
\end{minipage}
\end{center}
\end{table}

\section{Conclusion}
In this paper, we propose an adaptive smoothing mini-batch stochastic accelerated gradient (AdaSMSAG) method for solving nonsmooth convex stochastic composite minimization problems. By using the smoothing approximations, we do not need the requirement that the nonsmooth component is of max structure as in \cite{Nesterov,Wang}. The requirement that the proximal operator of the nonsmooth counterpart does not need also, compare to \cite{Quoc,bian}. The feasible region is no longer restricted to be compact as in \cite{Wang}. The adaptive strategy for decreasing the smoothing parameter is also beneficial to faster computational speed. The convergence result is shown. The order of the worst-case complexity is better than the state-of-the-art methods which is due to the proper use of the structure of the problem by smoothing approximation. Moreover, the efficiency of our AdaSMSAG method has been shown by numerical results on a risk management in portfolio optimization and a family of Wasserstein distributionally robust support vector machine (DRSVM) problems using real datasets.

\hspace{-5mm}\textbf{Funding}~~The work is supported in part by ``the Natural Science Foundation of Beijing, China" (Grant No. 1202021).

\end{document}